\definecolor{gr}{rgb}   {0.,   0.69,   0.23 }
\definecolor{bl}{rgb}   {0.,   0.5,   1. }
\definecolor{mg}{rgb}   {0.85,  0.,    0.85}
\definecolor{yl}{rgb}   {0.8,  0.7,   0.}
\newtheorem{theorem}{Theorem} [section]
\newtheorem{lemma}[theorem]{Lemma}
\newtheorem{proposition}[theorem]{Proposition}
\newtheorem{remark}[theorem]{Remark}
\newtheorem*{ackno}{Acknowledgment}
\newcommand{\Z}{\mathbb{Z}}
\newcommand{\R}{\mathbb{R}}
\newcommand{\T}{\mathbb{T}}
\let\Re=\undefined\DeclareMathOperator*{\Re}{Re}
\let\Im=\undefined\DeclareMathOperator*{\Im}{Im}
\newcommand{\NB}{\mathbb{N}}
\newcommand{\FL}{\mathcal{F}L} %%%%%%%%%%%% Fourier-Lebesgue spaces
\newcommand{\al}{\alpha}
\newcommand{\be}{\beta}
\newcommand{\dl}{\delta}
\newcommand{\eps}{\varepsilon}
\newcommand{\kk}{\kappa} 
\newcommand{\g}{\gamma}
\newcommand{\ld}{\lambda}
\newcommand{\s}{\sigma}
\newcommand{\ft}{\widehat}
\newcommand{\wt}{\widetilde}
\newcommand{\cj}{\overline}
\newcommand{\dx}{\partial_x}
\newcommand{\dt}{\partial_t}
\newcommand{\dd}{\partial}
\newcommand{\ta}{\theta}
\newcommand{\tr}{\text{tr}}
\newcommand{\HS}{\mathfrak{I}_{2}}
\DeclarePairedDelimiter\ceil{\lceil}{\rceil}
\DeclarePairedDelimiter\floor{\lfloor}{\rfloor}
\renewcommand{\l}{\ell}
\newcommand{\les}{\lesssim}
\newcommand{\ges}{\gtrsim}
\newcommand{\vk}{\varkappa}
\newcommand{\jb}[1]
{\langle #1 \rangle}
\newcommand{\ind}{\mathbbm{1}}
\DeclareMathOperator{\Id}{Id}
\numberwithin{equation}{section}
\numberwithin{theorem}{section}
\begin{document}
\baselineskip = 12.7pt

\title[mKdV in $L^2$]
{A remark on the well-posedness of the modified KdV equation in $L^2$
}

\author[J.~Forlano]
{Justin Forlano }

\address{
Justin Forlano\\
School of Mathematics\\
The University of Edinburgh\\
and The Maxwell Institute for the Mathematical Sciences\\
James Clerk Maxwell Building\\
The King's Buildings\\
Peter Guthrie Tait Road\\
Edinburgh\\ 
EH9 3FD\\
 United Kingdom\\
 and\\
Department of Mathematics\\
University of California\\
Los Angeles\\
CA 90095\\
USA}

\email{j.forlano@ed.ac.uk}

\subjclass[2020]{35Q53, 37K10}

\keywords{modified Korteweg-de Vries equation, global well-posedness}

\begin{abstract}
We study the real-valued modified KdV equation on the real line and the circle, in both the focusing and the defocusing case. 
By employing the method of commuting flows introduced by Killip and Vi\c{s}an (2019), we prove global well-posedness in $H^{s}$ for $0\leq s<\tfrac{1}{2}$. 
On the line, we show how the arguments in the recent paper by Harrop-Griffiths, Killip, and Vi\c{s}an (2020) may be simplified in the higher regularity regime $s\geq 0$.
 On the circle, we provide an alternative proof of the sharp global well-posedness in $L^2$ due to Kappeler and Topalov (2005), and also extend this to the large-data focusing case. 
\end{abstract}

%\date{\today}

%\vspace*{-5mm}

%%
%
\maketitle
%
%\tableofcontents

\vspace*{-6mm}

\section{Introduction}

We consider the real-valued modified Korteweg-de Vries equation (mKdV):
\begin{equation}\label{mkdv}
\dt q=-q'''+6\mu q^2  q',
\end{equation}
 posed on either the real line $\R$ or the circle $\T=\R /\Z$. We say that \eqref{mkdv} is defocusing if $\mu =+1$ and focusing if $\mu=-1$. The equation \eqref{mkdv} is also known as the Miura mKdV equation after \cite{Miura}, and this naming differentiates it from its generalisation the complex-valued (Hirota) mKdV equation~\eqref{Hirotamkdv}.

The mKdV equation~\eqref{mkdv} has attracted much attention from both applied and theoretical perspectives, in part
since it is both Hamiltonian and completely integrable. Indeed, with respect to the Poisson structure
\begin{align}
\{ F,G\}:=\int \tfrac{\dl F}{\dl q} \big( \tfrac{\dl G}{\dl q}\big)' dx \label{Poisson}
\end{align}
 on Schwartz space $\mathcal{S}$, the mKdV equation~\eqref{mkdv} is the Hamiltonian flow of
\begin{align}
H_{\text{mKdV}}(q)=\frac{1}{2}\int (q')^2 +\mu q^4  dx. \label{Hmkdv}
\end{align} 
Recall that on $\T$, Schwartz space $\mathcal{S}$ corresponds to the space $C^{\infty}(\T)$, while on $\R$, it consists of those infinitely differentiable functions which, with all of their derivatives, decay faster than any polynomial as $|x|\to \infty$. 
Some important consequences of the completely integrable structure of mKdV~\eqref{mkdv} are the existence of a Lax pair and an infinite number of conservation laws. In particular, the simplest conservation law here is the mass $M(q)=\frac{1}{2}\int q^2 dx $. Under the Poisson structure \eqref{Poisson}, $M$ generates translations and thus may be also thought of as the momentum.

The optimal well-posedness of mKdV~\eqref{mkdv} within the $L^2$-based Sobolev spaces $H^{s}$, $s\in \R$, has been a long studied question.
On the real line, mKdV~\eqref{mkdv} has the scaling symmetry $q_{\ld}(t,x):=\ld q(\ld^{3}t,\ld x)$, where $\ld >0$, and this induces the scaling-critical Sobolev exponent $s_{\text{crit}}=-\tfrac{1}{2}$, above which we expect well-posedness, and below which we expect ill-posedness.  
Whilst this scaling is no longer available on the circle, the heuristic provided by $s_{\text{crit}}$ is believed to persist in this setting. However, due to the different ways that dispersion manifests, the well-posedness theory diverges depending on the underlying geometry.

On the real line, it has long been known that Schwartz initial data leads to global Schwartz solutions \cite{Kato1, MTsut}. Well-posedness, without making use of dispersion, was obtained in $H^{s}(\R)$, for $s>\tfrac{3}{2}$, either by viewing mKdV~\eqref{mkdv} from the perspective of quasilinear hyperbolic equations~\cite{Kato2} or by energy methods \cite{ABFS, BS1}. This was improved by Kenig, Ponce and Vega \cite{KPV1, KPV2}, who exploited dispersion through the use of local smoothing estimates, and proved local well-posedness in $H^{s}(\R)$, for $s\geq \tfrac{1}{4}$. An alternative proof of this result in $H^{1/4}(\R)$ was given by Tao~\cite{Tao} using the Fourier restriction norm method. Global well-posedness for $s>\tfrac{1}{4}$ was established using the $I$-method by Colliander, Keel, Staffilani, Takaoka, and Tao~\cite{CKSTT1}, and at the end-point $s=\tfrac{1}{4}$ in \cite{Kishimoto, Guo}. We also mention the unconditional uniqueness results in \cite{MPV1,KOY}. 

It turns out that $H^{1/4}(\R)$ is the threshold for the local uniform continuous dependence of the solution map for \eqref{mkdv} with respect to the initial data \cite{KPV3, CCT1}. Consequently, it is impossible to construct solutions using the contraction mapping theorem when $s<\tfrac{1}{4}$. Using the short-time Fourier restriction norm method, Christ, Holmer, and Tataru~\cite{CHT} proved global existence of solutions for $-\tfrac{1}{8}<s<\tfrac{1}{4}$, with the uniqueness of these solutions unknown at the time; see also \cite{MPV1}.
Using the complete integrability of mKdV~\eqref{mkdv} (or more accurately, the complex-valued mKdV~\eqref{Hirotamkdv}), Koch and Tataru~\cite{KT}, and Killip, Vi\c{s}an, and Zhang~\cite{KVZ} obtained a priori bounds in the whole sub-critical range $s>-\tfrac{1}{2}$.

Recently, Harrop-Griffiths, Killip, and Vi\c{s}an~\cite{HGKV} completed the well-posedness study on the line by proving global well-posedness of mKdV~\eqref{mkdv} in the remaining range $-\tfrac{1}{2}<s<\tfrac{1}{4}$. 
 Their notion of solution is that the solution map has a unique continuous extension from Schwartz space to $H^{s}(\R)$. Note that uniqueness of such solutions is built into this definition although the solutions need not be distributional. 
Their result went far beyond this though: they proved global well-posedness in the same range for the complex-valued mKdV equation, and resolved, in the positive, the long standing open question of well-posedness of the cubic nonlinear Schr\"{o}dinger equation (NLS) on $\R$ for $-\tfrac{1}{2}<s<0$. Moreover, because of their local smoothing estimates, their solutions are in fact also distributional solutions when $s\geq -\frac 16$ in the complex-valued case and for $s>-\frac 12$ for \eqref{mkdv} on $\R$.

For the complex-valued mKdV and cubic NLS, these results are sharp as there is ill-posedness, in the sense of norm inflation, at the critical regularity $s_{\text{crit}}=-\tfrac{1}{2}$; see \cite{CCT2, Kishimoto2, Oh1, HGKV}. 
Interestingly, it is not known if their result is sharp for the defocusing real-valued mKdV~\eqref{mkdv} as there is no known ill-posedness at this time at the end-point $s_{\text{crit}}=-\tfrac{1}{2}$, although see \cite[Proposition A.3]{HGKV} for an ill-posedness result in the focusing case.

The approach that we will use in this paper is the method of commuting flows, which was introduced by Killip and Vi\c{s}an~\cite{KV}, and lead to the complete resolution of the well-posedness of KdV~\cite{KV} in the $L^2$-based Sobolev scale on the line. Their approach also obtained optimal well-posedness on the circle, providing an alternative to the argument by Kappeler and Topalov~\cite{KapTopKdv}. Broadly speaking, it utilises the completely integrable structure of the equation and proceeds by approximating the flow of the original equation by another flow which (i) Poisson commutes, (ii) is easily-solved in low regularity and (iii) converges, in some sense, to the original flow. 

The method of commuting flows as in \cite{KV} applies equally well on the line or the circle as it does not take into account the dispersive nature of the equation. 
Subsequently, this method was extended in the line setting in~\cite{BKV, HGKV} to incorporate dispersion through local-smoothing estimates, which led to impressive results regarding the well-posedness of the fifth-order KdV, and the cubic NLS and complex-valued mKdV equations on $\R$ (as discussed above), respectively.
  We also mention the very recent breakthroughs~\cite{KNV, HGKV2, HGKNV}, using these methods and more, to the low-regularity and large data well-posedness of the derivative NLS.

One of the goals of this paper is to show how the arguments in \cite{HGKV} can be simplified in the more regular setting with $0\leq s<\tfrac{1}{4}$ and still yield global well-posedness for mKdV~\eqref{mkdv}. Indeed, we show that in this regime, it is enough to argue as in \cite{KV}, without exploiting dispersion explicitly; see Theorem~\ref{THM:GWP} below. We hope that this paper may therefore partly serve as a bridge for a reader familiar with \cite{KV} to study \cite{HGKV}.

We now discuss the current well-posedness theory for the mKdV equation~\eqref{mkdv} on the circle. The works~\cite{ABFS, BS1} also apply on the circle yielding well-posedness in $H^{s}(\T)$, for $s>\tfrac{3}{2}$. By introducing the Fourier restriction norm method, Bourgain~\cite{BO2} proved local well-posedness in $H^{s}(\T)$, $s\geq \tfrac{1}{2}$, which was extended to global well-posedness using the $I$-method in~\cite{CKSTT1}. In stark contrast to the setting of the real line, the solution map on $\T$ fails to be locally uniformly continuous below $H^{\frac{1}{2}}(\T)$ \cite{CCT2}; see also \cite{BO2}. Refined energy based methods were applied in \cite{TT, NTT, MPV2}, culminating in local well-posedness and unconditional uniqueness in $H^{s}(\T)$ for $s\geq \tfrac{1}{3}$.

Combining their well-posedness argument for KdV in $H^{-1}(\T)$ from \cite{KapTopKdv} and the Miura transform, Kappeler and Topalov~\cite{KapTop} proved that the real-valued defocusing mKdV~\eqref{mkdv} is globally well-posed in $L^2(\T)$, in the sense that the solution map extends uniquely from $\mathcal{S}$ to $L^2(\T)$. 
In \cite{KapMol}, Kappeler and Molnar also obtained small data global well-posedness in $L^2(\T)$ for the focusing case.\footnote{This was a consequence of their main arguments, see \cite[Remark on pp. 2217]{KapMol}; see also Remark~\ref{RMK:FLP}.}
This result is sharp, as Molinet~\cite{Molinet} proved that the mKdV equation~\eqref{mkdv} is ill-posed below $L^2(\T)$. Moreover, Molinet used the short-time Fourier restriction norm method and also proved global existence of distributional solutions, without uniqueness, for both the focusing and the defocusing equations. This result also implies that the solutions constructed by Kappeler and Topalov are actually distributional solutions. We also mention that Schippa~\cite{Schippa} extended Molinet's result to $H^{s}(\T)$ for $s>0$.

We now move onto the main result of this paper.

\begin{theorem}\label{THM:GWP}
Let $0\leq s<\frac{1}{2}$ and consider either $\R$ or $\T$. Then, the real-valued mKdV~\eqref{mkdv} equation is globally well-posed for all initial data in $H^{s}$ in the sense that the solution map $\Phi$ extends uniquely from $\mathcal{S}$ to a jointly continuous map $\Phi:\R \times H^{s}\to H^{s}$.
\end{theorem}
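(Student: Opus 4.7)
The strategy is the method of commuting flows of Killip and Vi\c{s}an \cite{KV}, following the adaptation to mKdV in \cite{HGKV}. Exploiting complete integrability via the Zakharov--Shabat-type Lax pair, I would first introduce a renormalized perturbation determinant $\alpha(\kappa;q)$ parametrized by a spectral parameter $\kappa \gg 1$; it Poisson-commutes with $H_{\text{mKdV}}$, and its asymptotic expansion as $\kappa \to \infty$ recovers the polynomial conservation laws, starting with $M(q) = \tfrac{1}{2}\int q^{2}\,dx$. The first step is to verify that $\alpha(\kappa;\cdot)$ extends to a real-analytic functional on a ball $B_\delta \subset H^{s}$ once $\kappa \geq \kappa_{0}(\delta)$, and that it is conserved under the mKdV flow on Schwartz data.

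The second step is a coercivity/equicontinuity bound: for $0 \leq s < \tfrac{1}{2}$, a suitable combination of $\alpha(\kappa;q)$ with its explicit counterterms should approximate $\|q\|_{H^{s}}^{2}$ up to $o(1)$ as $\kappa \to \infty$, uniformly on $H^{s}$-bounded sets. The invariance of $\alpha$ under the mKdV flow then delivers both a priori $H^{s}$-bounds and equicontinuity in $H^{s}$ of the orbit family $\{\Phi(t)q : |t| \leq T,\, q \in Q\}$ for any bounded $Q \subset \mathcal{S}$ and finite $T$. Because $s \geq 0$, the quadratic part of $\alpha$ already encodes the $L^{2}$-norm, and the Sobolev estimates for $0 < s < \tfrac{1}{2}$ follow by a frequency-localized summation in $\kappa$, noticeably simpler than in the negative Sobolev range treated in \cite{HGKV}.

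Third, for each large $\kappa$ I would define an approximating Hamiltonian $H_\kappa$ built from $\alpha(\kappa;\cdot)$ so that $\{H_\kappa, H_{\text{mKdV}}\} = 0$, the $H_\kappa$-flow is globally well-posed in $H^{s}$ by a direct contraction argument (its nonlinearity being a smoothing perturbation of the Airy flow, in the spirit of \cite{KV}), and $H_\kappa \to H_{\text{mKdV}}$ as $\kappa \to \infty$ on $\mathcal{S}$. Writing the difference of the two flows as the time integral of a Poisson bracket and estimating in $H^{s}$ should give convergence of the $H_\kappa$-flow to the mKdV-flow on Schwartz data, locally uniformly on bounded time intervals. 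Combined with equicontinuity and the density of $\mathcal{S}$, this extends the Schwartz solution map uniquely to a jointly continuous map $\Phi : \R \times H^{s} \to H^{s}$, yielding the theorem on both $\R$ and $\T$.

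The main obstacle is closing the difference estimate between the $H_\kappa$- and mKdV-flows in $H^{s}$ without invoking the local smoothing and $X^{s,b}$-type spaces of \cite{HGKV}. At this regularity, I expect the multilinear expressions arising from $\{H_\kappa - H_{\text{mKdV}}, \cdot\}$ to be controllable by standard paraproduct decompositions and $L^{2}$-based Fourier estimates applied pointwise in time, so that the analysis runs in close parallel with that of \cite{KV} for KdV. Verifying this simplification in detail, and tracking the coercivity estimate sharply enough that the range reaches up to (but not including) $s = \tfrac{1}{2}$, is the bulk of the technical work; handling the focusing case uniformly will also require care, since the sign of $\mu$ affects the structure of $\alpha$ through the off-diagonal entries of the Lax operator.
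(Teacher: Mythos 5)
Your outline reproduces the paper's skeleton faithfully: the renormalized perturbation determinant $A(\kappa;q)$, its conservation and Poisson commutativity, the coercivity/equicontinuity step via dyadic combinations of the quadratic part (this matches the paper's use of the multiplier $w(\xi;\kappa)$ built from $A^{[2]}(\kappa)-\tfrac12A^{[2]}(\kappa/2)$), and the approximating Hamiltonian $H_{\kappa}=4\kappa^{2}M-4\kappa^{3}\mu A(\kappa)$ whose flow is solved by Cauchy--Lipschitz. Up to that point the proposal is essentially the paper's argument.

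There is, however, a genuine gap in your final step, and it is exactly where the range $0\leq s<\tfrac16$ lives. You propose to close the argument by ``writing the difference of the two flows as the time integral of a Poisson bracket and estimating in $H^{s}$,'' i.e.\ by estimating $\tfrac{d}{dt}q$ under the difference flow directly on $q$, pointwise in time, with paraproducts. For $q$ merely in $L^{2}$ (or $H^{s}$ with $s<\tfrac16$) this is not a matter of technique: the mKdV vector field $-q'''+6\mu q^{2}q'$ contains $\partial_x(q^{3})$, and $q^{3}$ fails to be a well-defined distribution below $H^{1/6}$ (the Sobolev threshold for $L^{3}$). The cubic term does not cancel against the $H_{\kappa}$ vector field, so the quantity you would be integrating in time is ill-defined, not merely hard to bound. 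The paper's resolution --- and the idea missing from your proposal --- is to change unknowns before estimating the difference flow: one introduces $r(\kappa)$, a specific linear combination of the off-diagonal entries of the diagonal Green's function of the Lax operator (see \eqref{grp}), and proves that $q\mapsto\tfrac{\mu}{4\kappa}r(\kappa)$ is a real-analytic diffeomorphism from a ball in $L^{2}$ onto $H^{2}$, gaining two derivatives. One then computes $\tfrac{d}{dt}r(\varkappa;q(t))$ under the difference flow, where every term (products of $p$, $\gamma$, $q$ and smoothing operators, as in \eqref{diffflowr}) is well-defined for $q\in L^{2}$, shows it tends to zero in $H^{-2}$ uniformly on equicontinuous sets, upgrades $H^{-2}$-convergence to $H^{2}$-convergence using equicontinuity of the image set, and only then transfers back to $q$ in $L^{2}$ by inverting the diffeomorphism. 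Your direct approach does work for $s\geq\tfrac16$ (the paper notes this explicitly), but without the change of variables the theorem as stated, covering all of $0\leq s<\tfrac12$, is out of reach. A secondary, minor inaccuracy: the $H_{\kappa}$-flow is a bounded, smoothing perturbation of a transport flow $4\kappa^{2}\partial_x$, not of the Airy flow; its solvability rests on the same diffeomorphism property of $r(\kappa)$, so this object is needed even before the difference-flow step.
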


In view of the above discussion, we believe that, whilst largely not new, Theorem~\ref{THM:GWP} and its proof have a number of benefits.
 Firstly, by assuming higher regularity, we may drastically simplify the argument in \cite{HGKV} for the real line while still obtaining well-posedness that, prior to \cite{HGKV}, was unknown in the range $0\leq s<\tfrac{1}{4}$. In particular, we show how the argument in \cite{KV} can be adapted to the mKdV equation. As we do not need to exploit local smoothing, this allows us to treat the line and circle settings in a parallel fashion, and we show how the structures in \cite{HGKV} can be adapted to the circle.
Moreover, Theorem~\ref{THM:GWP} on $\T$ provides an alternative argument to the global well-posedness of the defocusing mKdV in $L^2(\T)$ by Kappeler and Topalov~\cite{KapTop}. In particular, we do not use the Miura transform and thus are able to obtain large data well-posedness results for the focusing case.
Moreover, the solutions we construct agree with those of Kappeler and Topalov. By Sobolev embedding, our solutions are distributional solutions in $H^{s}$, for $s\geq \tfrac{1}{6}$; see Remark~\ref{RMK:H16}. They are also distributional solutions in the remaining range $0\leq s<\tfrac{1}{6}$, leaning on the works \cite{HGKV} on $\R$  or \cite{Molinet, Schippa} on $\T$.

As discussed, our proof of Theorem~\ref{THM:GWP} uses the method of commuting flows as in \cite{KV}. This approach is motivated by the fact that mKdV~\eqref{mkdv} admits a Lax pair, where the Lax operator we use at spectral parameter $\kk\geq 1$ is
\begin{align}
L_{q}(\kk)=\begin{bmatrix}
\kk-\dd& 0\\
0 &  \kk+\dd \\
\end{bmatrix}+Q \quad \text{where} \quad Q=\begin{bmatrix}
0 & q \\
-\mu q & 0
\end{bmatrix}. \label{lax}
\end{align}
 The Lax pair suggests that formally the quantity 
\begin{align}
\log \det (1+ \mu (\kk-\dd)^{-1}q (\kk+\dd)^{-1}q) \label{logdet}
\end{align}
will be conserved under the flow of \eqref{mkdv}. For $q\in L^2$, $(\kk-\dd)^{-1}q (\kk+\dd)^{-1}q$ is a trace-class operator so that the functional determinant in \eqref{logdet} is well-defined.
Unfortunately, the functional determinant may vanish, causing the logarithm of it in \eqref{logdet} to become ill-defined.

Following \cite{KVZ}, we instead consider the quantity 
\begin{align}
A(\kk,q):=\mu\,  \l(\kk) \sum_{m=1}^{\infty} \tfrac{(-\mu)^{m-1}}{m}\tr\big\{ \big[(\kk-\dd)^{-1}q(\kk+\dd)^{-1}q \big]^{m}\big\}, \label{Alogdet}
\end{align}
where $\l(\kk) =1$ on $\R$ and $\l(\kk)=\tanh(\kk/2)$ on $\T$.
This quantity formally represents a series expansion of \eqref{logdet}.  For real-valued $q\in L^2$, modulo the constant $\l(\kk)$, this is exactly the quantity considered in \cite{KVZ, HGKV}, which can be seen by cycling the trace. As mentioned, while the logarithm in \eqref{logdet} may fail to be defined, the series \eqref{Alogdet} converges absolutely as long as $\kk^{-\frac{1}{2}} \|q\|_{L^2}$ is small enough and it is conserved under the flow of mKdV~\eqref{mkdv}. This smallness condition can be ensured either by scaling on the real line or by choosing $\kk$ sufficiently large when on the circle.

The quantity $A(\kk,q)$ is a generating function (in inverse powers of $\kk$) for the conservation laws of \eqref{mkdv}; see \eqref{Aexpand}. We prove these statements about $A(\kk,q)$ in Section~\ref{SEC:dynamicsR}.  The expansion of $A(\kk,q)$ motivates the definition of the approximating flow $H_{\kk}$ in \eqref{Hkk}, which we show is globally well-posed in $L^2$ for small enough data or large enough $\kk$. See the beginning of Section~\ref{SEC:Hk}. In order to understand the $H_{\kk}$-flow, we follow \cite{HGKV} in Section~\ref{SEC:Pre} and introduce some auxiliary functions based on the matrix Green's function of $L_{q}(\kk)$. In \cite{HGKV}, the off-diagonal matrix elements $g_{12}(\kk)$ and $g_{21}(\kk)$ were of great use.

In the real-valued setting, we find that it is convenient and natural to consider linear combinations of these and we introduce the new variables $g_{-}(\kk)$ and $g_{+}(\kk)$; see \eqref{grp}. Interestingly, whilst the map $q\mapsto g_{+}(\kk)$ is bounded from $L^2$ to $H^{1}$, the map $q\mapsto \tfrac{\mu}{4\kk}g_{-}(\kk)$ is a real-analytic diffeomorphism from $L^2$ to $H^{2}$, thus gaining an additional derivative over each of $g_{12}(\kk), g_{21}(\kk)$ and $g_{+}(\kk)$.
This gain of regularity is not available in the complex-valued setting; see Remark~\ref{RMK:Cr}. 

In Section~\ref{SEC:Hk}, we complete the well-posedness argument by following the approach in \cite{KV} and using the change of variables $q\mapsto \tfrac{\mu}{4\kk}g_{-}(\kk)$ in order to show that the $H_{\kk}$-flow well-approximates the $H_{\text{mKdV}}$-flow in $L^2$.

We conclude this introduction with a few remarks.

\begin{remark}\rm  \label{RMK:FLP}
Whilst the well-posedness of mKdV~\eqref{mkdv} in $L^2(\T)$ is sharp, the a priori bounds in Schwartz space in \cite{KVZ} suggest that some form of well-posedness may persist below $L^2(\T)$. In this setting, it is expected that one should instead consider the following renormalized mKdV equation:
\begin{align}
\dt q=-q'''+6\mu \bigg( q^2 -\int_{\T} q^2 dx \bigg)  q'. \label{renormmkdv}
\end{align}
This equation was first introduced by Bourgain~\cite{BO2}. In $L^2(\T)$,  mKdV~\eqref{mkdv} and the renormalized mKdV~\eqref{renormmkdv} are equivalent, as solutions of the former can be related to solutions of the latter via the gauge transformation 
\begin{align}
q(t,x) \mapsto q\bigg(t, x+6\mu t \int_{\T}q^2 dx  \bigg). \label{gauge}
\end{align} 
However, outside of $L^2(\T)$, we expect that \eqref{renormmkdv} may be well-posed even though \eqref{mkdv} is not.
In this direction, Kappeler and Molnar~\cite{KapMol} proved that the defocusing renormalized mKdV equation~\eqref{renormmkdv} is locally well-posed in the Fourier-Lebesgue spaces $\FL^{p}(\T)$ for $p>2$ and also globally well-posed for small data (the small data global result extends to the focusing case too). Note that for $p>2$, it holds that $L^2(\T) \subset \FL^p(\T)$, so they construct solutions outside of $L^2(\T)$. It would be of interest to understand how the gauge transformation \eqref{gauge} could be implemented within the method of commuting flows and, in particular, whether the arguments in this paper could be extended to the Fourier-Lebesgue setting on $\T$. 
\end{remark}

\begin{remark}\rm \label{RMK:Hirota}

On the real line, the proof of Theorem~\ref{THM:GWP} also applies to the complex-valued mKdV equation
\begin{align}
\dt q = -q'''+6\mu |q|^{2} q', \label{Hirotamkdv}
\end{align}
where $q(t):\R\to \mathbb{C}$.  
 In this setting, one no longer benefits from using the change of variables $\tfrac{\mu}{4\kk} g_{-}(\kk)$ as a map from $L^2$ to $H^{2}$, and instead one can proceed more closely to \cite{HGKV} and use a single component of the diagonal matrix Green's function, such as $g_{12}(\kk)$. 
 However, we stress that our result does not extend to the complex-valued mKdV equation on $\T$. This is why we chose to focus on the real-valued setting.
 
  To understand why this is the case, we recall that Chapouto~\cite{Chapouto1, Chapouto2} showed that \eqref{Hirotamkdv} and even its renormalized variant, in the sense of the corresponding gauge transformation to \eqref{gauge}, are ill-posed below $H^{1/2}(\T)$. The cause of the instability is the possibility that the momentum 
 \begin{align*}
P(q)=\text{Im} \int_{\T} \cj{q} q' dx
\end{align*}
may be infinite outside $H^{1/2}(\T)$. For smooth initial data, the momentum is a conserved quantity of \eqref{Hirotamkdv}. If additionally $q$ is real-valued, this conservation is trivial since $P(q)=0$. 

It was proposed instead in \cite{Chapouto1} that one should consider a second renormalised complex-valued mKdV equation for study below $H^{1/2}(\T)$, which is equivalent to \eqref{Hirotamkdv} for regular enough $q$ via a gauge transformation involving the momentum.
As discussed in Remark~\ref{RMK:FLP}, we do not know yet how to effectively implement such gauge transformations within the method of commuting flows.
\end{remark}

\section{The variables $\g(\kk)$, $g_{-}(\kk)$ and $g_{+}(\kk)$.}\label{SEC:Pre}

Our goal in this section is to understand the variables $\g(\kk)$, $g_{-}(\kk)$ and $g_{+}(\kk)$. We begin with some notation and preliminary results.
Our conventions for the Fourier transform on the line are 
\begin{align*}
\ft f(\xi)=\frac{1}{\sqrt{2\pi}}\int_{\R} e^{-i\xi x}f(x)dx \quad \text{and}\quad f(x)=\frac{1}{\sqrt{2\pi}}\int_{\R}e^{i\xi x}\ft f(\xi)d\xi.
\end{align*}
On the circle, we define the Fourier transform by 
\begin{align*}
\ft f(\xi)=\int_{0}^{1}e^{-ix\xi}f(x)dx, \quad \text{and} \quad f(x)=\sum_{\xi \in 2\pi \Z}\ft f(\xi)e^{ix\xi}.
\end{align*}
We denote by $\mathcal{S}$ the class of Schwartz functions on the line or the class of $C^{\infty}$ -functions if on the circle. 
For $\kk\geq 1$, we define the $L^2$-based Sobolev space $H^{s}_{\kk}$ via the norm 
\begin{align*}
\|f\|_{H^{s}_{\kk}} = \| (4\kk^2+\xi^2)^{\frac{s}{2}} \ft f(\xi)\|_{L^2}, 
\end{align*}
which we take with respect Lebesgue measure on $\R$ or counting measure on $2\pi \Z$.

The functional derivative $\dl / \dl q$ is defined as 
\begin{align*}
\tfrac{d}{ds}\big\vert_{s=0} F(q+sf) = dF\vert_{q}(f)=\int \tfrac{\dl F}{\dl q}(x) f(x)dx.
\end{align*}

For $0<\s<1$ and $\kk \geq 1$, we define the operator $(\kk \pm \dd)^{-\s}$ using the Fourier multiplier $(\kk \pm i\xi)^{-\s}$, where, for $\arg z\in (-\pi, \pi]$, we define 
\begin{align*}
z^{-\s} = |z|^{-\s} e^{-i \s \arg z}.
\end{align*}
This convention implies that 
\begin{align*}
[ (\kk \pm \dd)^{-\s}]^{\ast} = ( \kk\mp \dd)^{-\s}.
\end{align*}

In order to deal with both geometries in as parallel a fashion as possible, we consider the following class of potentials:
Given $\dl>0$, let 
\begin{align*}
B_{\dl, \kk} := \{ q\in L^2\,\, :\, \, \kk^{-\frac{1}{2}}\|q\|_{L^2 }  \leq \dl\},
\end{align*}
for $\kk\geq 1$.
We recall the definition of the Lax operator in \eqref{lax}, 
which we view as an unbounded operator on $L^2 \times L^2$ with domain $H^{1} \times H^{1}$.
The Green's function of $L_{q}(\kk)$ will be a matrix valued perturbation of the zero potential case, i.e. when $q=0$. In this case of zero potential, we have
\begin{align}
R_{0}(\kk):=L_{0}(\kk)^{-1}= 
\begin{bmatrix}
(\kk-\dd)^{-1} & 0\\
0 & & (\kk+\dd)^{-1}
\end{bmatrix}, \label{R0}
\end{align}
which has integral kernel
\begin{align}
G_{0}(x,y;\kk)&=e^{-\kk|x-y|} 
\begin{bmatrix}
\ind_{\{ x<y\}} & 0 \\
0 & \ind_{\{ y<x\}}
\end{bmatrix} \quad \text{on} \quad\R, \label{G0R} \\
G_{0}(x,y;\kk)&=\frac{1}{1-e^{-\kk}} 
\begin{bmatrix}
e^{\kk( x-y-\ceil{x-y})} & 0 \\
0 & e^{-\kk(x-y-\floor{x-y})}
\end{bmatrix} \quad \text{on} \quad\T, \label{G0T}
\end{align}
where $\ceil{x}$ and $\floor{x}$ denote the smallest integer $n$ such that $x\leq n$, or the largest integer $m$ such that $m\leq x$, respectively.
By formally iterating the resolvent identity, we arrive at an expression for the resolvent $R(\kk):=L_{q}(\kk)^{-1}$:
\begin{align}
R(\kk)=R_{0}(\kk)+\sum_{\l=1}^{\infty}(-1)^{\l} (R_{0}(\kk)Q)^{\l}R_{0}(\kk).  \label{Rseries}
\end{align}
The following proposition justifies that this series converges if $q\in B_{\kk,\dl}$, for $\dl>0$ sufficiently small and is an inverse to $L_{q}(\kk)$.

\begin{proposition}\label{PROP:Greens}
There exists $\dl>0$ such that for all $\kk\geq 1$, and $q\in  B_{\dl,\kk}$, the inverse $R(\kk)=L_{q}(\kk)^{-1}$ admits an integral kernel $G(x,y;\kk,q)$, for which the mapping 
\begin{align}
L^2\ni q\mapsto G-G_0 \in (H_{\kk}^{\frac{3}{4}-\eps}\times H_{\kk}^{\frac{3}{4}-\eps})   \otimes (H_{\kk}^{\frac{3}{4}-\eps}\times H_{\kk}^{\frac{3}{4}-\eps}) \label{Gmap}
\end{align}
is continuous, for any $\eps>0$. Moreover, $G-G_0$ is a continuous function of $(x,y)$. 
\end{proposition}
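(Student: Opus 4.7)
The plan is to prove the proposition by analyzing the Neumann expansion \eqref{Rseries} in three conceptual steps: operator-norm convergence, tensor Sobolev norm bounds, and joint continuity of the kernel. First, using the explicit kernels \eqref{G0R} and \eqref{G0T}, one has the uniform bound $\|G_0(x,\cdot;\kk)\|_{L^2_z} \les \kk^{-1/2}$ (on $\T$ the prefactor $(1-e^{-\kk})^{-1}$ is $O(1)$ for $\kk \geq 1$), from which Schur's test gives $\|R_0(\kk) Q\|_{L^2\to L^2} \les \kk^{-1/2} \|q\|_{L^2}$. Thus for $\dl$ sufficiently small, \eqref{Rseries} converges absolutely in operator norm, and the standard Neumann argument identifies the limit with $L_q(\kk)^{-1}$.

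Second, and most substantially, I would show that the kernel of $R_0 Q R_0$ lies in $H_\kk^{3/4-\eps}\otimes H_\kk^{3/4-\eps}$ (in each pair of matrix indices) with norm bounded by $C \|q\|_{L^2}$ uniformly in $\kk \geq 1$. Passing to Fourier variables, this reduces to the Hilbert--Schmidt integral
\[
\iint (4\kk^2+\xi_1^2)^{s-1} |\widehat q(\xi_1-\xi_2)|^2 (4\kk^2+\xi_2^2)^{s-1}\, d\xi_1\, d\xi_2 \les \|q\|_{L^2}^2
\]
with $s = \tfrac{3}{4}-\eps$ (and an analogous discrete version on $\T$). Schur's test reduces this in turn to showing that
\[
I(\eta) := \int (4\kk^2+\xi^2)^{s-1}(4\kk^2+(\xi-\eta)^2)^{s-1}\, d\xi
\]
is bounded uniformly in $\eta$, which holds precisely when $s < \tfrac{3}{4}$ (the borderline being a logarithmic divergence at infinity). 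For the higher-order terms $(R_0 Q)^\l R_0$ with $\l \geq 2$, writing them as $(R_0 Q)^{\l-1}(R_0 Q R_0)$ and inserting the operator-norm bound from Step~1 produces an extra factor $\dl^{\l-1}$, so the series converges geometrically in the tensor Sobolev space, yielding both the claimed bound and, by the multilinearity of each summand in $q$, the $L^2$-continuity of $q \mapsto G - G_0$.

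Third, for the joint continuity of $G - G_0$ in $(x,y)$, each summand $(R_0 Q)^\l R_0$ with $\l \geq 1$ is jointly continuous: its defining iterated integral convolves $q \in L^2$ against bounded kernels $G_0$ whose diagonal jumps are smoothed by integration in the intermediate variables, so dominated convergence applies. A direct Cauchy--Schwarz bound using $\|G_0(x,\cdot;\kk)\|_{L^2} \les \kk^{-1/2}$ and $\|G_0\|_{L^\infty} \les 1$ yields $\sup_{x,y}|(R_0 Q)^\l R_0(x,y)| \les \dl^\l$, so the series converges uniformly on all of $\R^2$ (resp.\ $\T^2$) and the sum is jointly continuous. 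The main technical obstacle I foresee is the sharpness in Step~2: the exponent $\tfrac{3}{4}-\eps$ is forced by the borderline integrability of $I(\eta)$ at $s = \tfrac{3}{4}$, and handling this uniformly in $\kk \geq 1$ (and in both geometries) requires careful bookkeeping with the weight $(4\kk^2 + \xi^2)$.
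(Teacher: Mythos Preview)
Your approach is very close to the paper's, which also establishes the three ingredients you list, but packaged via a preliminary lemma containing three Hilbert--Schmidt estimates. In particular, your integral $I(\eta)$ is exactly the Fourier-side content of the paper's estimate $\|(\kk\pm\dd)^{-1/2+\al}q(\kk\mp\dd)^{-1/2+\al}\|_{\HS}\les\|q\|_{L^2}$ for $0<\al<\tfrac14$, and your Step~3 is a more hands-on alternative to the paper's route (which simply observes that the tensor Sobolev bound with exponent $>\tfrac12$ yields joint continuity by Sobolev embedding).

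There is, however, a genuine gap in your treatment of the higher-order terms in Step~2. You write $(R_0Q)^\l R_0=(R_0Q)^{\l-1}(R_0QR_0)$ and propose to ``insert the operator-norm bound from Step~1.'' But the $L^2\to L^2$ operator norm of $(R_0Q)^{\l-1}$ does \emph{not} control the $H^s_\kk\otimes H^s_\kk$ norm of the composed kernel: applying a bounded operator $A$ in the $x$-variable preserves $L^2_x\otimes H^s_y$, not $H^s_x\otimes H^s_y$. Equivalently, the tensor norm equals $\|\jb{D}^s T\jb{D}^s\|_{\HS}$, and your factorisation requires the bound $\|\jb{D}^s(R_0Q)^{\l-1}\jb{D}^{-s}\|_{L^2\to L^2}$, i.e.\ boundedness of $R_0Q$ on $H^s_\kk$, which is \emph{not} what Step~1 gives.

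This is fixable in two ways. First, since one may assume $s\in(\tfrac12,\tfrac34)$, Sobolev embedding gives $H^s\hookrightarrow L^\infty$, whence multiplication by $q\in L^2$ maps $H^s\to L^2$ and the smoothing $R_0$ returns to $H^s_\kk$; this yields $\|R_0Q\|_{H^s_\kk\to H^s_\kk}\les\kk^{-1/2}\|q\|_{L^2}$ and your argument goes through. Second---and this is the paper's route---one uses the additional Hilbert--Schmidt estimate $\|q(\kk\pm\dd)^{-1}q\|_{\HS}\les\|q\|_{L^2}^2$ to handle the middle factors directly. Either way, the repair is short, but as written your Step~2 for $\l\geq 2$ does not close.
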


Before we give a proof of Proposition~\ref{PROP:Greens}, we state a useful lemma.

\begin{lemma}\label{LEM:HSq}
Let $\kk\geq 1$ and $0<\al<\tfrac{1}{4}$. Then, for any $f\in \mathcal{S}$, we have
\begin{align}
\| (\kk\pm \dd)^{-1} f(\kk \mp \dd)^{-1}\|_{\HS} & \les\kk^{-\frac{1}{2}} \|f\|_{H^{-1}_{\kk}}, \label{gammaHs} \\
\| (\kk\pm\dd)^{-\frac{1}{2}+\al}f (\kk\mp \dd)^{-\frac{1}{2}+\al}\|_{\HS} & \les  \kk^{-\frac{1}{2}-2\al}\|f\|_{L^2}, \label{HSq14} \\
\| f(\kk\pm \dd)^{-1}f\|_{\HS} & \les \|f\|_{L^2}. \label{HSq12}
\end{align}
\end{lemma}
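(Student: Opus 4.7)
The plan is to treat each estimate as a direct Hilbert--Schmidt kernel computation, passing to Fourier when convenient. The starting observation is that an operator of the form $T = m(D)\, f\, n(D)$ (with $f$ acting by multiplication) satisfies, by Plancherel,
\[
\|T\|_{\HS}^2 \;=\; \frac{1}{2\pi}\iint_\R |m(\xi)|^2\,|\hat f(\xi-\eta)|^2\,|n(\eta)|^2\,d\xi\,d\eta
\]
on $\R$, with the analogous double sum over $2\pi\Z$ on $\T$. For the resolvent factors at play, $|(\kk\mp i\xi)^{-a}|^2 = (\kk^2+\xi^2)^{-a}$, so each bound reduces to a concrete convolution estimate whose $\kk$-dependence is read off after a rescaling $\xi = \kk\tau$.

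For \eqref{gammaHs}, I would use the standard identity
\[
\int_\R \frac{d\xi}{(\kk^2+\xi^2)(\kk^2+(\xi-\sigma)^2)} \;=\; \frac{2\pi}{\kk(4\kk^2+\sigma^2)},
\]
verified by a single contour integration (or equivalently by the partial-fraction decomposition $(\kk^2+\xi^2)^{-1} = (2\kk)^{-1}[(\kk-i\xi)^{-1}+(\kk+i\xi)^{-1}]$). Substituting $\sigma = \xi-\eta$ in the Fourier-side formula then produces exactly the $H^{-1}_\kk$-weight $(4\kk^2+\sigma^2)^{-1}$, along with the advertised prefactor $\kk^{-1}$. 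On $\T$ the corresponding sum is comparable to the integral up to absolute constants uniformly for $\kk\geq 1$, so the same bound holds there.

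For \eqref{HSq14}, Young's inequality after the substitution $\sigma = \xi-\eta$ gives $\|T\|_{\HS}^2 \lesssim \|g\ast g\|_{L^\infty}\|f\|_{L^2}^2$, where $g(\xi)=(\kk^2+\xi^2)^{-1/2+\alpha}$. Since $g$ is even and radially decreasing, $\|g\ast g\|_{L^\infty} = (g\ast g)(0) = \int g^2 = \int (\kk^2+\xi^2)^{-1+2\alpha}\,d\xi$. The hypothesis $\alpha<\tfrac14$ is precisely what yields convergence at infinity, and the rescaling $\xi=\kk\tau$ extracts the stated power of $\kk$. The circle version follows by the same argument with sums in place of integrals.

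Finally, for \eqref{HSq12} I would work directly with the position-space kernel $f(x)\,G(x,y;\kk)\,f(y)$, where $G$ is the off-diagonal Green's function from \eqref{G0R}--\eqref{G0T}. In both geometries $|G(x,y;\kk)| \les 1$ uniformly in $\kk \ge 1$ (via $(1-e^{-\kk})^{-1} \le (1-e^{-1})^{-1}$ on $\T$, and the trivial bound $e^{-\kk|x-y|}\le 1$ on $\R$), so $\|f(\kk\pm\dd)^{-1}f\|_{\HS}^2 \les \iint |f(x)|^2|f(y)|^2\,dx\,dy = \|f\|_{L^2}^4$. The main obstacle will be the careful bookkeeping of $\kk$-powers in \eqref{HSq14}, in particular ensuring that the rescaling produces the correct balance near the endpoint $\alpha=\tfrac14$; no dispersive, local smoothing, or commutator input is needed.
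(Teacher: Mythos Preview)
Your proposal is correct and close in spirit to the paper's proof: both pass to the Fourier side and reduce each estimate to bounding a convolution-type integral or sum in the frequency variable. The difference lies in how that inner integral is controlled. The paper works on $\T$ and estimates the sum $\sum_{\eta}(\kk^2+\eta^2)^{-a}(\kk^2+(\xi+\eta)^2)^{-a}$ by splitting into the three regimes $|\xi+\eta|\ll|\eta|$, $|\xi+\eta|\sim|\eta|$, $|\xi+\eta|\gg|\eta|$; you instead evaluate the $\R$-integral exactly by residues for \eqref{gammaHs} (yielding the $H^{-1}_{\kk}$ weight with no loss), and for \eqref{HSq14} you use the Cauchy--Schwarz/Young bound $\|g\ast g\|_{L^\infty}\le\|g\|_{L^2}^2$ with $g(\xi)=(\kk^2+\xi^2)^{-\frac12+\al}$. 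Your route is a little cleaner on $\R$ and avoids any case analysis; the paper's splitting is more hands-on on $\T$, whereas your passage from integrals to sums (``comparable to the integral'') is valid but left implicit---it works because for $\kk\ge 1$ the summands vary by at most a fixed factor on unit frequency scale. For \eqref{HSq12} the two arguments coincide.

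One small bookkeeping remark: if you actually carry out your rescaling in \eqref{HSq14}, $\int(\kk^2+\xi^2)^{-1+2\al}\,d\xi=C_{\al}\,\kk^{-1+4\al}$ gives $\|\cdot\|_{\HS}\les\kk^{-\frac12+2\al}\|f\|_{L^2}$, which is also what the paper's own proof produces; the exponent $-\tfrac12-2\al$ printed in the lemma appears to be a sign typo, so ``the stated power'' should be read accordingly.
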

\begin{proof} 
In the following, we consider the circle case, as the case of the real line is similar.
For \eqref{gammaHs}, we note that for any $h\in C^{\infty}(\T)$, we may write
\begin{align*}
(\kk \pm \dd)^{-1}f(\kk \mp\dd)^{-1}h = \int_{0}^{1} K_{\pm}(x,y)h(y)dy,
\end{align*}
where the kernel $K_{\pm}$ is given by 
\begin{align*}
K_{\pm}(x,y)= \sum_{\xi \in 2\pi \Z} \sum_{\eta \in 2\pi\Z} \frac{\ft f(\xi -\eta)}{(\kk \pm i\xi)(\kk \mp i(\xi-\eta))}e^{i\xi x}e^{-i\eta y}.
\end{align*}
Therefore, by Plancherel, 
\begin{align*}
\| (\kk\pm \dd)^{-1} f(\kk \mp \dd)^{-1}\|_{\HS(\T)}^{2} & = \iint |K_{\pm}(x,y)|^2 dxdy \\
&  =\sum_{\xi\in 2\pi \Z} |\ft f(\xi)|^2 \sum_{\eta \in 2\pi\Z} \tfrac{1}{(\kk^2+\eta^2)(\kk^2+(\xi+\eta)^2}.
\end{align*}
In the summation over $\eta$, we estimate separately the contributions due to $|\xi+\eta|\ll |\eta|$, $|\xi+\eta|\sim |\eta|$ and $|\xi+\eta|\gg |\eta|$ and obtain 
\begin{align}
\| (\kk\pm \dd)^{-1} f(\kk \mp \dd)^{-1}\|_{\HS(\T)}^{2}  \les \kk^{-1} \sum_{\xi\in 2\pi \Z} \frac{|\ft f(\xi)|^{2}}{ 4\kk^2+ \xi^2} \sim \kk^{-1} \|f\|_{H^{-1}_{\kk}}. \label{gammaHs1}
\end{align}
Similarly, the proof of \eqref{HSq14} reduces to showing 
\begin{align*}
\sum_{\eta \in 2\pi\Z} \tfrac{1}{(\kk^2+\eta^2)^{\frac{1}{2}-\al} (\kk^2 +(\xi+\eta)^2)^{\frac{1}{2}-\al}}  \les \kk^{-1+4\al},
\end{align*}
for any $\al <\tfrac{1}{4}$,
which can also be dealt with by considering the same contributions to the sum as in showing \eqref{gammaHs1}. The restriction $\al<\tfrac{1}{4}$ appears naturally as a summation condition in the case $|\xi+\eta|\sim |\eta|$.
The remaining estimate \eqref{HSq12} follows from the observation that the kernels for the operators $(\kk\pm\dd)^{-1}$ in \eqref{G0R} and \eqref{G0T} belong to $L^{\infty}(\T \times \T)$, uniformly in $\kk\geq 1$.
\end{proof}

\begin{proof}[Proof of Proposition~\ref{PROP:Greens}]
The series \eqref{Rseries} converges in operator norm uniformly for $\kk\geq 1$ and $q\in  B_{\dl,\kk}$ using \eqref{gammaHs} and \eqref{HSq12}. Moreover, this shows that $R(\kk)-R_{0}(\kk)\in \HS$ and hence admits an integral kernel $G(x,y;\kk)$ in $L^2$. Then, the integral kernel $G-G_0$ is a continuous function of $(x,y)$ as \eqref{HSq14} and \eqref{HSq12} imply that $R-R_{0}$ is Hilbert-Schmidt from $H^{-\al}\times H^{-\al}$ to $H^{\al}\times H^{\al}$ for any $\al<\tfrac{3}{4}$.
\end{proof}

On the line and the circle, Proposition~\ref{PROP:Greens} implies that $(G-G_0)(x,y)$ extends continuously to the diagonal $x=y$, and thus we may define:
\begin{align}
\begin{split}
\g(x;\kk)& := \l(\kk) \big[ (G-G_0)_{11}(x,x;\kk)+(G-G_0)_{22}(x,x;\kk)\big],\\
g_{-}(x;\kk)&: = \l(\kk) \big[ G_{21}(x,x;\kk)-\mu G_{12}(x,x;\kk)\big],\\
g_{+}(x;\kk)& := \l(\kk)\big[ G_{21}(x,x;\kk)+\mu G_{12}(x,x;\kk)\big],
\end{split}\label{grp}
\end{align}
where 
\begin{align}
\l(\kk)= 1 \quad \text{on} \quad \R, \quad \text{and} \quad \l(\kk)= \tanh(\kk/2)=\tfrac{1-e^{-\kk}}{1+e^{-\kk}} \quad \text{on} \quad \T. \label{lk}
\end{align}

\begin{proposition}[Properties of $\g, g_{+}$ and $g_{-}$]\label{PROP:gpr}
 There exists $\dl_0>0$ such that for all $0<\dl\leq \dl_0$ the following hold: given $\kk\geq 1$ and $q\in  B_{\dl,\kk}$, the maps $q\to \g(\kk)$ and $q\to g_{+}(\kk)$ are bounded from $B_{\dl,\kk}$ to $H^{1}_{\kk}$ and satisfy 
\begin{align}
\|\g(\kk)\|_{H^{1}_{\kk}}& \les \kk^{-\frac{1}{2}}\|q\|_{L^2}^{2}, \label{g1}\\
\|g_{+}(\kk)\|_{H^{1}_{\kk}}& \les \|q\|_{L^2} , \label{pbd}
\end{align}
and the map $q\to \tfrac{\mu}{4\kk}g_{-}(\kk)$ is a real-analytic diffeomorphism from $ B_{\dl,\kk}$ to $H^{2}_{\kk}$ satisfying 
\begin{align}
\big\| \tfrac{\mu}{4\kk}g_{-}(\kk)\big\|_{H^{2}_{\kk}}\les \|q\|_{L^2}. \label{rh2}
\end{align}
Furthermore, the following identities hold 
\begin{align}
\g'(\kk)& = 2q g_{+}(\kk), \label{Gderiv} \\
g_{+}'(\kk)& =-2\kk g_{-}(\kk)+2\mu q(\g(\kk)+1), \label{pderiv}\\
g_{-}'(\kk)&=-2\kk g_{+}(\kk). \label{rderiv}
\end{align}
in the sense of distributions.
Lastly, if $q\in \mathcal{S}\cap B_{\dl,\kk}$, then $\g(\kk), g_{+}(\kk)$ and $g_{-}(\kk)$ also belong to $\mathcal{S}$.
\end{proposition}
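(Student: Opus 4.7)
The plan combines three ingredients: the convergent series \eqref{Rseries}, which yields analyticity and Schwartz preservation; the distributional identities from $L_q(\kk)R(\kk)=I$ and $R(\kk)L_q(\kk)=I$, which produce \eqref{Gderiv}--\eqref{rderiv}; and these differential identities together with Sobolev embedding and Lemma~\ref{LEM:HSq}, which deliver the bounds \eqref{g1}, \eqref{pbd}, \eqref{rh2}. The diffeomorphism property then follows from the analytic inverse function theorem applied to a fixed-point reformulation.

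\emph{Differential identities.} Expanding $L_q(\kk) R(\kk) = I$ entrywise gives four distributional equations such as $(\kk - \dd_x)G_{11} + q(x) G_{21} = \dl(x-y)$, and $R(\kk) L_q(\kk) = I$ (after integration by parts in $y$) gives analogous equations in $y$ such as $(\kk + \dd_y) G_{11} - \mu q(y) G_{12} = \dl(x-y)$. Adding the two equations for each entry produces four identities such as
\begin{align*}
(\dd_x+\dd_y)G_{11} = q(x) G_{21} + \mu q(y) G_{12},
\end{align*}
in which the delta contributions cancel pairwise. The combinations $G_{11}+G_{22}$ and $G_{21} \pm \mu G_{12}$ are continuous at $y=x$ (the latter two because $G_{0,12}=G_{0,21}=0$; the former because the individual diagonal jumps of $G_{0,11}$ and $G_{0,22}$ cancel in their sum), and evaluating the corresponding sums of the four identities on the diagonal yields \eqref{Gderiv}--\eqref{rderiv}. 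The $+1$ in \eqref{pderiv} traces to the normalization $\lambda(\kk)[G_{0,11}(x,x^{\pm})+G_{0,22}(x,x^{\pm})]=1$, which is trivial on $\R$ and holds on $\T$ via $\tanh(\kk/2)\coth(\kk/2)=1$. The derivation of \eqref{rderiv} relies on the additional cancellation that $q(x)-q(y)$ vanishes on the diagonal while $G_{11}-G_{22}$ remains bounded.

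\emph{Sobolev bounds.} First establish \eqref{g1} directly from the series for $\g(\kk)$: the leading quadratic-in-$q$ term is controlled in $H^1_\kk$ by $\kk^{-1/2}\|q\|_{L^2}^2$ via \eqref{gammaHs}, and the higher order terms sum geometrically thanks to $\kk^{-1/2}\|q\|_{L^2}\leq\dl$. The Sobolev embedding $H^1_\kk\hookrightarrow L^\infty$ (with constant $\kk^{-1/2}$) then gives $\|\g(\kk)\|_{L^\infty}\les\kk^{-1}\|q\|_{L^2}^2\leq\dl^2$. Combining \eqref{pderiv} and \eqref{rderiv} eliminates $p(\kk)$:
\begin{align*}
(4\kk^2 - \dd^2) r(\kk) = 4\mu\kk\, q(\g(\kk)+1) \quad \Longrightarrow \quad \tfrac{\mu}{4\kk}r(\kk) = (4\kk^2-\dd^2)^{-1}\bigl[q(\g(\kk)+1)\bigr].
\end{align*}
Since $(4\kk^2-\dd^2)^{-1}$ is an isometry $L^2\to H^2_\kk$, this gives $\|\tfrac{\mu}{4\kk}r(\kk)\|_{H^2_\kk} = \|q(\g(\kk)+1)\|_{L^2} \les (1+\dl^2)\|q\|_{L^2}$, proving \eqref{rh2}. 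The bound \eqref{pbd} follows from $p(\kk) = -(2\kk)^{-1}r'(\kk)$ (via \eqref{rderiv}) combined with the just-established $H^2_\kk$-bound on $r(\kk)$ and a direct estimate of $p'(\kk)$ via \eqref{pderiv}.

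\emph{Diffeomorphism, density and Schwartz preservation.} The linearization of $\tfrac{\mu}{4\kk}r(\kk)$ at $q=0$ equals $\lambda(\kk)(4\kk^2-\dd^2)^{-1}$, a bounded isomorphism $L^2\to H^2_\kk$ (with $\lambda(\kk)\in[\tanh(1/2),1]$ for $\kk\geq 1$). Rearranging $\tfrac{\mu}{4\kk}r(\kk) = (4\kk^2-\dd^2)^{-1}[q(\g(q)+1)]$ as $q = (4\kk^2-\dd^2)\tfrac{\mu}{4\kk}r - q\,\g(q)$, the smallness $\|\g(q)\|_{L^\infty}\les\dl^2$ makes the right-hand side a contraction in $L^2$, uniformly in $\kk$, yielding the inverse, which is real-analytic by composition with the real-analytic map $q \mapsto \g(q)$. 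All bounds and identities extend from $\mathcal{S}\cap B_{\dl,\kk}$ to all of $B_{\dl,\kk}$ by density and the continuity in Proposition~\ref{PROP:Greens}. Schwartz preservation holds because for $q\in\mathcal{S}$ every term in the series for $G-G_0$ consists of Schwartz-preserving operators composed with multiplication by Schwartz functions, and the series converges in the Schwartz topology. The main obstacle is closing the bootstrap uniformly in $\kk$ and ensuring that the contraction covers the full ball $B_{\dl,\kk}$, whose $L^2$-radius $\dl\kk^{1/2}$ grows with $\kk$; this is made tractable by the scaling built into the $H^s_\kk$-norms.
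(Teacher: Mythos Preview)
Your argument is correct and takes a genuinely different route from the paper in several places; the comparison is worth recording.

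\textbf{Differential identities.} You derive \eqref{Gderiv}--\eqref{rderiv} by writing out $L_q R=I$ and $RL_q=I$ entrywise and restricting appropriate combinations to the diagonal. The paper instead pairs the series \eqref{gseries}--\eqref{rseries} against a test function $f$ and uses the commutator identities $f'=[\kk+\dd,f]=-[\kk-\dd,f]$ and \eqref{fcomm} to shuffle derivatives. Your approach is more geometric and is the one used in \cite{HGKV}; it does require knowing that the relevant combinations of $G$ are $C^1$ near the diagonal so that $\partial_x[f(x,x)]=(\partial_x+\partial_y)f|_{y=x}$ is justified, which you handle by working first with $q\in\mathcal{S}$ (where smoothness of $G-G_0$ follows from the series) and then passing to the limit. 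The paper's series manipulation sidesteps this diagonal-restriction issue entirely.

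\textbf{Sobolev bounds.} The paper estimates $\g,p,r$ all in $H^1_\kk$ directly from their series, including an explicit kernel computation for the $m=0$ term of $p$ on both geometries, and only afterwards upgrades $r$ to $H^2_\kk$ via \eqref{rderiv}. You instead bound only $\g$ from the series, then use the already-established identities \eqref{pderiv}--\eqref{rderiv} to obtain the closed formula $\tfrac{\mu}{4\kk}r=(4\kk^2-\dd^2)^{-1}[q(\g+1)]$ (this is exactly \eqref{rfull}), which gives \eqref{rh2} immediately and then \eqref{pbd} via $p=-(2\kk)^{-1}r'$. This is more economical and avoids the kernel computation for the leading term of $p$.

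\textbf{Diffeomorphism.} The paper applies the inverse function theorem and then checks the derivative bound \eqref{gderivbd} to make the local inverse uniform over $B_{\dl,\kk}$. You recast the inversion as a fixed-point problem $q=(4\kk^2-\dd^2)\rho-q\g(q)$ and invoke contraction; this is equivalent but note that the contraction constant requires a Lipschitz bound on $q\mapsto\g(q)$ from $L^2$ to $L^\infty$, which is precisely \eqref{gderivbd} in disguise --- you should state this explicitly. One small slip: the linearisation of $\tfrac{\mu}{4\kk}r$ at $q=0$ is $(4\kk^2-\dd^2)^{-1}$, not $\l(\kk)(4\kk^2-\dd^2)^{-1}$; the factor $\l(\kk)$ is already absorbed into the definition \eqref{grp} and cancels in the explicit $m=0$ computation, as the paper shows just above \eqref{mboundp}. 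This does not affect the argument.

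\textbf{Schwartz preservation.} Your one-line appeal to convergence of the series in the Schwartz topology is correct in spirit but thin on detail; the paper instead bootstraps regularity via the identities \eqref{Gderiv}--\eqref{rderiv} and handles decay on $\R$ by commuting $x^n$ through the resolvents, which is more explicit.
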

\begin{proof} 
For $f\in \mathcal{S}$, the expansion \eqref{Rseries} implies that
\begin{align}
\jb{f, \g(\kk)} &  =\l(\kk)\sum_{m=1}^{\infty} (-\mu)^{m} \Big[ \tr\{ (\kk-\dd)^{-1}f [(\kk-\dd)^{-1}q(\kk+\dd)^{-1}q]^{m}\} \label{gseries} \\
& \hphantom{XXXXXXXXX} + \tr\{ (\kk+\dd)^{-1}f [(\kk+\dd)^{-1}q(\kk-\dd)^{-1}q]^{m}\}\Big], \notag\\
\jb{f, g_{-}(\kk)}& = \l(\kk)\sum_{m=0}^{\infty} \mu (-\mu)^{m} \Big[ \tr\{ (\kk-\dd)^{-1}f (\kk+\dd)^{-1}q [(\kk-\dd)^{-1}q(\kk+\dd)^{-1}q]^{m}\}  \label{rseries}\\
& \hphantom{XXXXXXXXX} + \tr\{ (\kk+\dd)^{-1}f (\kk-\dd)^{-1}q [(\kk+\dd)^{-1}q(\kk-\dd)^{-1}q]^{m}\}\Big], \label{rseries2}
\end{align}
and $\jb{f, g_{+}(\kk)}$ satisfies a similar formula as $\jb{f, g_{-}(\kk)}$ does in \eqref{rseries} except the second term \eqref{rseries2} in the summation is subtracted instead of added. Thus, in order to verify \eqref{g1} and \eqref{pbd}, it suffices to individually estimate the summands in \eqref{gseries} and \eqref{rseries}. For \eqref{gseries}, when $m=1$, \eqref{gammaHs} and \eqref{HSq12} imply
\begin{align}
\begin{split}
&| \tr\{ (\kk-\dd)^{-1}f(\kk-\dd)^{-1} q(\kk+\dd)^{-1}q\}|   \\
&\hphantom{XX} \leq \| (\kk-\dd)^{-1}f(\kk+\dd)^{-1}\|_{\HS} \| (\kk+\dd)(\kk-\dd)^{-1}\|_{L^2 \to L^2} \|q(\kk+\dd)^{-1}q\|_{\HS}\\ & \hphantom{XX} \les \kk^{-\frac{1}{2}} \|q\|_{L^2}^{2} \|f\|_{H^{-1}_{\kk}}.
\end{split} \label{g1esti}
\end{align}
For $m\geq 2$, we proceed similarly along with \eqref{HSq12} to obtain
\begin{align}
|\tr\{ (\kk-\dd)^{-1}f [(\kk-\dd)^{-1}q(\kk+\dd)^{-1}q]^{m}\} | \les \kk^{-\frac{1}{2}}\|q\|_{L^2}^2 (\kk^{-\frac{1}{2}}\|q\|_{L^2})^{2(m-1)} \|f\|_{H^{-1}_{\kk}}. \label{gtrbd}
\end{align}
Summing this estimate in $m$ with \eqref{g1esti} as in \eqref{gseries} and using duality proves \eqref{g1}, provided that $\dl>0$ is sufficiently small.

We move onto verifying \eqref{pbd}. Similar to the above computations, for any $m\geq 1$, \eqref{gammaHs} and \eqref{HSq12} imply
\begin{align}
&|\tr\{ (\kk-\dd)^{-1}f (\kk+\dd)^{-1}q [(\kk-\dd)^{-1}q(\kk+\dd)^{-1}q]^{m}\} |  \notag\\
& \les  \kk^{-\frac{1}{2}} \|f\|_{H^{-1}_{\kk}} \|q (\kk-\dd)^{-1}q\|_{\HS}   \|(\kk\pm\dd)^{-1}q\|_{L^2 \to L^2}^{2(m-1)+1} \notag \\
& \les \kk^{-1} \|q\|_{L^2}^{3} (\kk^{-\frac{1}{2}}\|q\|_{L^2})^{2(m-1)} \|f\|_{H^{-1}_{\kk}}. \label{mboundp}
\end{align}
It remains to consider the case when $m=0$, which we treat by direct computation, focusing on the circle case. Using \eqref{G0T}, 
\begin{align*}
\l(\kk) \tr\{ (\kk+\dd)^{-1}f(\kk-\dd)^{-1} q\}  \\
& = \l(\kk)\int_{\T} f(x) \int_{\T}  (G_0)_{22}(x,y ;\kk)(G_{0})_{11}(y,x;\kk)q(y)dy dx \\
& = \l(\kk)\int_{\T} f(x) \int_{\T} (G_0)_{22}(x,y ;\kk)^{2} q(y)dy dx \\
& = \l(\kk)\tfrac{1+e^{-\kk}}{1-e^{-\kk}} \int_{\T} f(x) \int_{\T} (G_0)_{22}(x,y; 2\kk) q(y)dydx \\
& = \jb{ (2\kk+\dd)^{-1}f,q}_{L^2} \leq \|f\|_{H^{-1}_{\kk}} \|q\|_{L^2}.
\end{align*}
Summing \eqref{mboundp} over $m\geq 2$ completes the proof of \eqref{pbd}. We note that these arguments also imply that $g_{-}(\kk)\in H^{1}_{\kk}$. 
Assuming for now the veracity of \eqref{rderiv}, this regularity property of $g_{-}(\kk)$ is upgraded to $H^{2}_{\kk}$ by \eqref{rderiv} and \eqref{pbd}. This gives \eqref{rh2}. 

Now, \eqref{Gderiv} follows from considering $\jb{f,\g'(\kk)}$ and using \eqref{gseries} with the operator identities $f'=[\kk+\dd, f]=-[\kk-\dd,f]$, where $f\in \mathcal{S}$. For \eqref{pderiv}, the series representation similar to \eqref{rseries} and the identities 
\begin{align}
f'=-(\kk-\dd)f-f(\kk+\dd)+2\kk f=(\kk+\dd)f+f(\kk-\dd)-2\kk f \label{fcomm}
\end{align}
imply 
\begin{align}
\jb{f,g_{+}'(\kk)} =-2\kk\jb{f,g_{-}(\kk)}+2\mu \jb{f,q\g(\kk)} + 4\mu \kk \l(\kk) \tr\{ (\kk^2-\dd^2)^{-1}fq\}. \label{pderiv1}
\end{align}
Consider the last term on the right hand side of \eqref{pderiv1}.
The integral kernel for the operator $(\kk^2-\dd^2)^{-1}$ is 
\begin{align*}
K(x,y)=
\begin{cases}
\tfrac{1}{2\kk} e^{-\kk |x-y|} &\text{on} \quad  \R, \\
 \tfrac{1}{2\kk (1-e^{-\kk})}\big[ e^{-\kk \|x-y\|}+e^{\kk(\|x-y\|-1)}\big] & \text{on} \quad \T,
\end{cases}
\end{align*}
where $\|x-y\| :=\text{dist}(x-y,\Z)$.
See, for example, \cite{KVZ}. Therefore, on $\R$ 
\begin{align*}
2\kk\tr\{ (\kk^2-\dd^2)^{-1}fq\}=2\kk \int \tfrac{1}{2\kk}f(x)q(x)dx= \jb{f,q},
\end{align*}
while on $\T$, 
\begin{align*}
2\kk\tr\{ (\kk^2-\dd^2)^{-1}fq\}& = 2\kk \int \tfrac{1+e^{-\kk}}{2\kk(1-e^{-\kk})} f(x)q(x)dx = \tfrac{1}{\l(\kk)} \jb{f,q}.
\end{align*}
In either geometry, these identities inserted into \eqref{pderiv1} verify \eqref{pderiv}. Finally, \eqref{rderiv} follows in a similar way to \eqref{pderiv} using \eqref{fcomm}.

To show that $\g, g_{+}, g_{-}$ belong to $\mathcal{S}$ if $q\in \mathcal{S}$, it suffices to show that if $q\in H^{k}$, then $(\g,g_{+},g_{-})\in H^{k+1}\times H^{k+1}\times H^{k+2}$ for each $k\in \mathbb{N} \cup\{0\}$ and we have an estimate
\begin{align}
\| \g(\kk)\|_{H^{k+1}_{\kk}} +\|g_{+}(\kk)\|_{H^{k+1}_{\kk}} +\|g_{-}(\kk)\|_{H^{k+2}_{\kk}} \leq C_{k}( \|q\|_{H^{k}}), 
\label{smoothest}
\end{align}
where $C_{k}>0$ is a polynomial.
 We prove this by induction on $k$, where we have already proved the base case $k=0$. The inductive step follows from \eqref{Gderiv}, \eqref{pderiv} and \eqref{rderiv}.

 In order to establish that $\g, g_{+}$ and $g_{-}$ belong to $\mathcal{S}(\R)$ if $q\in \mathcal{S}(\R)\cap B_{\dl,\kk}$, we only need to show that  
\begin{align}
\|\jb{x}^{n}\g(\kk)\|_{L^2 (\R)} +\|\jb{x}^{n}g_{+}(\kk)\|_{H^{1}(\R)} +\|\jb{x}^{n} g_{-}(\kk)\|_{L^2 (\R)} \les_{n} \| \jb{x}^{n} q\|_{L^2 (\R)},\label{decays}
\end{align}
as the decay of all the derivatives will follow from \eqref{decays}, and \eqref{Gderiv}, \eqref{pderiv} and \eqref{rderiv}. To prove \eqref{decays}, we employ duality and the relations \eqref{gseries} and \eqref{rseries}. The main point is to commute the weight $x^{n}$ so that it lands on a nearby factor of $q$, instead of the test function $f$. This can be achieved by using the identity
\begin{align*}
x^{n} (\kk\pm \dd)^{-1} =\small{\sum_{m=0}^{n}} (\pm 1)^{m} \tfrac{n!}{(n-m)!} (\kk\pm \dd)^{-m-1}x^{n-m}.
\end{align*}

Finally, we verify the diffeomorphism claims for $\tfrac{\mu}{4\kk}g_{-}(\kk)$. We have from \eqref{rderiv} and \eqref{pderiv} that
\begin{align}
\tfrac{\mu}{4\kk}g_{-}(\kk)=(4\kk^2-\dd^2)^{-1}(q +\g(\kk)q). \label{rfull}
\end{align}
Thus, $d\big(\tfrac{\mu}{4\kk}g_{-}(\kk)\big)\big\vert_{q=0} =(4\kk^2-\dd^2)^{-1}$,
which is an isomorphism from $L^2$ into $H^{2}_{\kk}$. By the inverse function theorem, this alone ensures that $\tfrac{\mu}{4\kk}g_{-}(\kk)$ is a diffeomorphism from $B_{\wt{\dl},\kk}$ to $H^{2}_{\kk}$, where $\wt{\dl}>0$ is small and may depend on $\kk\geq 1$. To extend this diffeomorphism to the whole of $B_{\dl,\kk}$, we need a uniform in $\kk\geq 1$ estimate on the derivative $dr\vert_{q}$ throughout $B_{\dl,\kk}$. As part of this, we need the estimate
\begin{align}
\| d\g(\kk)\vert_{q} \|_{L^2 \to L^{\infty}} \les \kk^{-1} \|q\|_{L^2} \label{gderivbd}
\end{align}
for $q\in \mathcal{S}\cap B_{\dl,\kk}$ and reducing $\dl$, if necessary. This follows easily by arguments similar to verifying that $\g(\kk)\in H^{1}_{\kk}$ using \eqref{gseries}.
From \eqref{rfull}, it follows that for any test function $f$, we have 
\begin{align}
dr(\kk)\vert_{q}(f)= \tfrac{4\mu \kk}{4\kk^2-\dd^2}\big[ f(1+\g(\kk))+q d\g(\kk)\vert_{q}(f)\big]. \label{nonlinearr}
\end{align}
Therefore, assuming \eqref{gderivbd} and using \eqref{nonlinearr}, we have 
\begin{align*}
\tfrac{1}{4\kk}\| dr(\kk)\vert_{q =0} -dr(\kk)\vert_{q}\|_{L^2 \mapsto H^{2}_{\kk}} \les \|\g(\kk)\|_{L^{\infty}}+\|d\g(\kk)\vert_{q}\|_{L^2 \mapsto L^{\infty}} 
 \les  \kk^{-1}\|q\|_{L^2}^{2} \les \dl^{2},
\end{align*}
where the implicit constant is uniform in $\kk\geq 1$. Thus, $\frac{\mu}{4\kk} g_{-}(\kk):B_{\dl, \kk}\to H^{2}_{\kk}$ is a real-analytic diffeomorphism from $B_{\dl,\kk}$ to $H^2_{\kk}$. 
This completes the proof of the diffeomorphism claim and hence also the proof of Proposition~\ref{PROP:gpr}.
\end{proof}

It will be useful later on to separately consider the first few terms in the series expansions \eqref{gseries} and \eqref{rseries} for $\g$ and $g_{+}$. In particular, we write
$\g(\kk)=\g^{[2]}(\kk)+\g^{[\geq 4]}(\kk)$, where 
\begin{align}
\g^{[2]}(\kk)&=-2\mu (2\kk-\dd)^{-1} q \cdot  (2\kk+\dd)^{-1}q\quad \text{and} \quad \g^{[\geq 4]}(\kk):=\g(\kk)-\g^{[2]}(\kk). \label{g2term}
\end{align}
The superscript notation here is motivated by the fact that $\g^{[2]}$ represents the term in $\g(\vk)$ which is quadratic in $q$. The formula for $\g^{[2]}$ follows by computing explicitly the first term in the sum \eqref{gseries}. On the line, this reduces to computing a simple contour integral, while on the circle, we need the identity 
\begin{align*}
\sum_{\xi \in 2\pi \Z} \frac{1}{  [\kk \pm i\xi][\kk \pm i(\xi+\xi_1+\xi_2)][\kk \mp i(\xi+\xi_2)]  }= \frac{1+e^{-\kk}}{1-e^{-\kk}}\frac{1}{(2\kk\pm i\xi_1)(2\kk \mp i\xi_2)},
\end{align*}
which follows from partial fraction decompositions and the identity 
\begin{align*}
\sum_{\xi\in 2\pi \Z} \big(   \tfrac{1}{\kk+i\xi}+\tfrac{1}{\kk-i\xi}\big)=\tfrac{1+e^{-\kk}}{1-e^{-\kk}}.
\end{align*}
Similarly, we also decompose $g_{+}(\kk)$ as $g_{+}(\kk)=g_{+}^{[1]}(\kk)+g_{+}^{[3]}(\kk) +g_{+}^{[\geq 5]}(\kk)$
where
\begin{align}
g_{+}^{[1]}(\kk) = -2\mu \dd (4\kk^2-\dd^2)^{-1}q, \quad g_{+}^{[3]}(\kk)=-4\mu \dd(4\kk^2-\dd^{2})^{-1}\big( q \g^{[2]}(\kk)\big), \label{pterms}
\end{align}
and $g_{+}^{[\geq 5]}(\kk):=g_{+}(\kk)-g_{+}^{[1]}(\kk)-g_{+}^{[3]}(\kk)$. We have the following bounds on the remainder pieces $\g^{[\geq 4]}(\kk)$ and $g_{+}^{[\geq 5]}(\kk)$:
\begin{align}
\|\g^{[\geq 4]}(\kk)\|_{L^1} & \les \kk^{-3} \|q\|_{L^2}^{4} \quad \text{and} \quad \|g_{+}^{[\geq 5]}(\kk)\|_{H^{1}_{\kk}} \les \kk^{-2}\|q\|_{L^2}^{5}. \label{gpbig}
\end{align}
The estimate on $g_{+}^{[\geq 5]}$ is immediate from \eqref{mboundp} for $m\geq 2$ and the bound for $\g^{[\geq 4]}$ follows by duality, and hence \eqref{gseries} with $f\in L^{\infty}$, and \eqref{gtrbd} this time placing $(\kk\pm \dd)^{-1}f(\kk\mp \dd)^{-1}$ in operator norm.

\begin{remark}\rm \label{RMK:Cr}
The additional regularity property of $g_{-}$ does not immediately seem to be available in the complex-valued setting. 
In the complex-valued setting,
\begin{align*}
\mu \, g_{-}(\kk)=  (2\kk+\dd)^{-1}(q+q\g)+(2\kk-\dd)^{-1}( \cj{q}+\cj{q}\g).
\end{align*}
In particular, the first order term of this is
\begin{align*}
\mu \, g_{-}^{[1]}(\kk)=  4\kk (4\kk^{2}-\dd^{2})^{-1} \Re (q) -2i\, \dd (4\kk^2-\dd^2)^{-1}  \Im(q).
\end{align*}
Hence, if $\Im (q)\neq 0$, there does not appear to be an additional smoothing effect from considering any linear combination of diagonal Green's functions. 
\end{remark}

\section{Conservation laws, dynamics, and equicontinuity}\label{SEC:dynamicsR}

In this section, we prove the claims in the introduction regarding the quantity $A(\kk,q)$ in \eqref{logdet}. Namely, we show that is well-defined for $q\in B_{\dl,\kk}$ and that it commutes with $A(\vk,q)$, for any $\vk\geq 1$. We then use the conservation of $A(\kk)$ to derive a priori bounds and equicontinuity results for the flow of mKdV~\eqref{mkdv} in $H^{s}$ for $0\leq s<\tfrac{1}{2}$. We begin with properties of $A(\kk,q)$.

\begin{proposition}\label{PROP:Aprops}
There exists $\dl>0$, so that for all $\kk\geq 1$ and $q\in  B_{\dl,\kk}$, the following hold: the series \eqref{Alogdet} converges uniformly in $\kk\geq 1$, it is differentiable with derivative
\begin{align}
\tfrac{\dl A}{\dl q}&=g_{-}(\kk), \label{derivAq}
\end{align}
and for $q\in \mathcal{S} \cap B_{\dl,\kk}$ it has the asymptotic expansion 
\begin{align}
A(\kk;q)= \tfrac{\mu}{\kk}M(q)-\tfrac{\mu}{4\kk^3}H_{\textup{mKdV}}(q)+\mathcal{O}(\kk^{-5}), \label{Aexpand}
\end{align}
and for any $\vk\geq 1$, it Poisson commutes with $A(\vk)$, that is, $\{ A(\kk), A(\vk)\}=0.$
%\begin{align*}
%\{ A(\kk), A(\vk)\}=0.
%\end{align*}
\end{proposition}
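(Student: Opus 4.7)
We address the four claims in turn.

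\smallskip

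\noindent\emph{Convergence of the series.} Writing $K(\kk) := (\kk-\dd)^{-1}q(\kk+\dd)^{-1}q$, a direct computation from the kernel formulas \eqref{G0R}--\eqref{G0T} yields $\|(\kk\pm\dd)^{-1}q\|_{\HS} \lesssim \kk^{-1/2}\|q\|_{L^{2}}$, whence
\[
\|K(\kk)\|_{\HS} \le \|(\kk-\dd)^{-1}q\|_{\HS}\,\|(\kk+\dd)^{-1}q\|_{\HS} \lesssim \kk^{-1}\|q\|_{L^{2}}^{2}
\]
and $\|K(\kk)\|_{\text{op}} \le \|K(\kk)\|_{\HS}$. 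The bound $|\tr\{K^{m}\}| \le \|K\|_{\HS}^{2}\|K\|_{\text{op}}^{m-2}$ for $m\ge 2$ (with $m=1$ handled directly) then gives $|\tr\{K^{m}\}|/m \lesssim m^{-1}(\kk^{-1/2}\|q\|_{L^{2}})^{2m}$, so \eqref{Alogdet} converges uniformly on $B_{\dl,\kk}$ once $\dl$ is sufficiently small.

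\smallskip

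\noindent\emph{The derivative \eqref{derivAq}.} I differentiate \eqref{Alogdet} term by term: the first-order variation of $\tr\{K^{m}\}$ in direction $f$ produces $2m$ trace expressions, one per occurrence of $q$ in $K^{m}$. Cyclicity of trace groups these into $m$ copies each of the two ``canonical'' traces appearing in \eqref{rseries}--\eqref{rseries2}, distinguished by whether the test function enters as $(\kk-\dd)^{-1}f$ or $(\kk+\dd)^{-1}f$. The combinatorial $m$ cancels the $1/m$ in \eqref{Alogdet}, and re-indexing $m \mapsto m+1$ matches the outcome with $\jb{f, r(\kk)}$, yielding \eqref{derivAq}.

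\smallskip

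\noindent\emph{Asymptotic expansion \eqref{Aexpand}.} For $q\in\mathcal{S}$ I separate the $m=1,2$ summands and estimate the tail. A sharper bound $|\tr\{K^{m}\}| \lesssim_{q} \kk^{-(2m-1)}$ valid for Schwartz $q$, obtained by repeatedly integrating by parts in the integral kernel (each pass trading a $q$ for $q'/\kk$), renders the tail from $m\ge 3$ of order $O(\kk^{-5})$. On $\R$, the $m=1$ trace equals $\iint \ind_{y>x}\, e^{-2\kk(y-x)}\,q(x)q(y)\,dx\,dy$ (with the analogous sum on $\T$); Taylor-expanding $q(y)$ about $x$, the odd-power contributions drop out after integration by parts, and the even ones yield $\frac{1}{2\kk}\|q\|_{L^{2}}^{2} - \frac{1}{8\kk^{3}}\int(q')^{2}\,dx + O(\kk^{-5})$. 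An analogous iterated-kernel computation for $m=2$ gives $\tr\{K^{2}\} = \frac{1}{4\kk^{3}}\int q^{4}\,dx + O(\kk^{-5})$. Multiplying by $\mu\l(\kk)$ and the combinatorial coefficients of \eqref{Alogdet}, using $\l(\kk) = 1+O(e^{-\kk})$ on $\T$ and $\mu^{2}=1$, reassembles $\frac{\mu}{\kk}M(q)-\frac{\mu}{4\kk^{3}}H_{\textup{mKdV}}(q)+O(\kk^{-5})$.

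\smallskip

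\noindent\emph{Poisson commutation (the main obstacle).} By \eqref{Poisson}, \eqref{derivAq}, and \eqref{rderiv},
\[
\{A(\kk),\,A(\vk)\} \;=\; \int r(\kk)\,\partial_{x}r(\vk)\,dx \;=\; -2\vk\int r(\kk)\,p(\vk)\,dx.
\]
The antisymmetry of $\{\cdot,\cdot\}$ combined with this only supplies the ``swap'' identity $\vk\int r(\kk)\,p(\vk)\,dx = -\kk\int r(\vk)\,p(\kk)\,dx$, which is insufficient to conclude vanishing. The missing input must come from the Lax structure underpinning \eqref{lax}: up to the factor $\l(\kk)$, $A(\kk)$ is a spectral invariant of $L_{q}(\kk)$, and two spectral invariants of the same isospectral family must Poisson-commute. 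Concretely, the plan is to substitute the series \eqref{rseries}--\eqref{rseries2} for $r(\kk)$ and $r(\vk)$ into the bracket and manipulate using (i) the first resolvent identity $R(\kk)R(\vk)=(R(\vk)-R(\kk))/(\kk-\vk)$ to collapse mixed products of resolvents at $\kk$ and $\vk$, and (ii) cyclicity of trace to expose a manifestly $\kk\leftrightarrow\vk$-symmetric expression which, set against the antisymmetry of $\{\cdot,\cdot\}$, must vanish. Tracking the nonlinear $\gamma(\kk)q$-corrections in $r(\kk)$ through this reorganisation is where the bulk of the technical work is expected to lie.
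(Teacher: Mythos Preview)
Your treatment of convergence and of the derivative formula \eqref{derivAq} is essentially the same as the paper's. Your approach to the asymptotic expansion \eqref{Aexpand} is different but valid: you expand the kernels directly, whereas the paper writes $A(\kk,q)=\int q(x)\int_{0}^{1} r(\kk,\ta q)\,d\ta\,dx$ via the fundamental theorem of calculus and \eqref{derivAq}, and then feeds in the already-established expansions $r^{[1]}(\kk)=\tfrac{\mu}{\kk}q+\tfrac{\mu}{4\kk^{3}}q''+\mathcal{O}(\kk^{-5})$, $r^{[3]}(\kk)=-\tfrac{1}{2\kk^{3}}q^{3}+\mathcal{O}(\kk^{-5})$, $r^{[\ge 5]}=\mathcal{O}(\kk^{-5})$. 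The paper's route avoids the iterated-kernel computation for $m=2$ by reusing structure from Proposition~\ref{PROP:gpr}.

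The genuine gap is the Poisson commutation. You correctly reach $\{A(\kk),A(\vk)\}=-2\vk\int r(\kk)p(\vk)\,dx$ and correctly observe that antisymmetry alone gives only a swap identity. But your plan to substitute the series \eqref{rseries} and collapse mixed resolvents via the first resolvent identity is not carried out; you yourself flag the nonlinear $\gamma q$-corrections as unresolved. That is the whole difficulty, and the proposal does not close it.

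The paper bypasses the series manipulation entirely. Using only the ODEs \eqref{Gderiv}, \eqref{pderiv}, \eqref{rderiv}, one checks the two pointwise identities
\[
p(\kk)r(\vk)\mp r(\kk)p(\vk)=\tfrac{\pm 1}{2(\kk\mp\vk)}\,\partial_{x}\Big\{p(\kk)p(\vk)\mp r(\kk)r(\vk)-\mu[\g(\kk)+1][\g(\vk)+1]\Big\}.
\]
These say that both the symmetric and antisymmetric combinations of $p(\kk)r(\vk)$ are total $x$-derivatives. Writing $\int r(\kk)r'(\vk)\,dx=\int[\kk p(\kk)r(\vk)-\vk p(\vk)r(\kk)]\,dx$ (via \eqref{rderiv} and one integration by parts), you add the vanishing integral $\int[\kk r(\kk)p(\vk)+\vk p(\kk)r(\vk)]\,dx=0$ (itself a consequence of the ``$+$'' identity and \eqref{rderiv}) to rewrite the bracket as $(\kk+\vk)\int[p(\kk)r(\vk)+r(\kk)p(\vk)]\,dx+(\vk-\kk)\int[p(\kk)r(\vk)-r(\kk)p(\vk)]\,dx$, which is then zero by the two identities. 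This is a short ODE computation, not a resolvent-series argument, and it is what your proof is missing.
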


\begin{proof}
The convergence of \eqref{Alogdet} readily follows from \eqref{HSq12}. 
We compute
\begin{align*}
\tfrac{d}{d\ta}A(\kk;q+\ta f)\big\vert_{\ta=0}& 
 = \mu \l(\kk) \sum_{m=0}^{\infty} (-\mu)^{m}\tr \bigg\{  \big[ (\kk-\dd)^{-1}q(\kk+\dd)^{-1}q\big]^{m} \big( (\kk-\dd)^{-1}q(\kk+\dd)^{-1}f \\
 & \hphantom{XXXXXXXXXXX} +  (\kk+\dd)^{-1}f(\kk-\dd)^{-1}q\big)\big\}  \bigg\} \\
 & =\jb{f, g_{-}(\kk)}
\end{align*}
by regrouping terms and cycling the trace with $(\kk\pm \dd)^{-1}f$, which yields \eqref{derivAq}. For \eqref{Aexpand}, since $A(\kk; 0)=0$, Fubini's theorem gives
\begin{align*}
A(\kk,q)=\int_{0}^{1} \tfrac{d}{d\ta}A(\kk, \ta q)d\ta = \int q(x) \int_{0}^{1} g_{-}(\kk,\ta q) d\ta dx.
\end{align*}
The identities \eqref{Gderiv}, \eqref{pderiv} and \eqref{rfull} imply
\begin{align}
\begin{split}
g_{-}^{[1]}(\kk) = \tfrac{\mu}{\kk}q+\tfrac{\mu}{4 \kk^3}q''+\mathcal{O}(\kk^{-5}), & \quad g_{-}^{[3]}(\kk) = -\tfrac{4}{(2\kk)^3}q^{3}+\mathcal{O}(\kk^{-5}),  \\
 g_{-}^{[5]}(\kk) & =\mathcal{O}(\kk^{-5}),
\end{split} \label{rparts} 
\end{align}
where $g_{-}(\kk)=g_{-}^{[1]}(\kk)+g_{-}^{[3]}(\kk)+g_{-}^{[\geq 5]}(\kk)$.
Then, 
\begin{align*}
A(\kk, q)&= \int \tfrac{\mu}{2\kk} q^{2}  + \tfrac{\mu}{(2\kk)^3}qq''-\tfrac{1}{(2\kk)^3}q^4 dx+\mathcal{O}(\kk^{-5})=\tfrac{\mu}{\kk}M(q)-\tfrac{\mu}{4\kk^3}H_{\textup{mKdV}}(q)+\mathcal{O}(\kk^{-5}).
\end{align*}
Finally, we verify the Poisson commutativity property. We assume $\kk \neq \vk$. To begin, we note that \eqref{Gderiv}, \eqref{pderiv} and \eqref{rderiv} imply the following identities for $q\in \mathcal{S}\cap B_{\dl,\kk}\cap B_{\dl,\vk}$,
\begin{align}
\big[ g_{+}(\kk)g_{-}(\vk)-g_{-}(\kk)g_{+}(\vk)   \big] &=\tfrac{1}{2(\kk-\vk)}\dx \big\{ g_{+}(\kk)g_{+}(\vk)-g_{-}(\kk)g_{-}(\vk)-\mu [\g(\kk)+1][\g(\vk)+1] \big\}, \label{ident1} \\
\big[  g_{+}(\kk)g_{-}(\vk)+g_{-}(\kk)g_{+}(\vk)\big]&=-\tfrac{1}{2(\kk+\vk)} \dx  \big\{ g_{+}(\kk)g_{+}(\vk)+g_{-}(\kk)g_{-}(\vk)-\mu [\g(\kk)+1][\g(\vk)+1] \big\}. \label{ident2}
\end{align}
In the following, we argue that the boundary term from integration by parts always vanishes. This is obvious by periodicity on $\T$, while on $\R$, \eqref{g1}, \eqref{pbd} and \eqref{rh2} imply that $\g(\kk), g_{+}(\kk)$ and $g_{-}(\kk)$ all vanish as $|x|\to \infty$. Then, by \eqref{derivAq} and \eqref{rderiv}, we have 
\begin{align}
\big\{A(\kk),A(\vk) \big\} = \int \tfrac{\dl A(\kk)}{\dl q} \dx\big( \tfrac{\dl A(\vk)}{\dl q} \big) dx &= \int g_{-}(\kk) g_{-}'(\vk) dx \notag \\
& = \int \big[ \kk g_{+}(\kk)g_{-}(\vk)-\vk g_{+}(\vk)g_{-}(\kk)\big]dx. \label{AAcons1}
\end{align}
It follows from \eqref{rderiv} and \eqref{ident2} that 
\begin{align*}
\int \kk g_{-}(\kk)g_{+}(\vk)+\vk g_{+}(\kk)g_{-}(\vk)dx=0.
\end{align*}
Therefore, using \eqref{ident1} and \eqref{ident2}, we have 
\begin{align*}
\eqref{AAcons1} & =\int \kk\big[ g_{+}(\kk)g_{-}(\vk)+g_{-}(\kk)g_{+}(\vk)\big]+\vk \big[ g_{+}(\kk)g_{-}(\vk)-g_{+}(\vk)g_{-}(\kk)\big]dx=0.
\end{align*}

\end{proof}

\begin{lemma}
There exists $\dl>0$ such that the following holds: given $\kk\geq 1$, $q(0)\in \mathcal{S}\cap B_{\dl,\kk}$,  let $q(t)\in \mathcal{S}$ denote the global-in-time solution to mKdV~\eqref{mkdv}. Then, the function $g_{-}(\kk,t)=g_{-}(\kk, q(t))$ satisfies 
\begin{align}
\tfrac{d}{dt}g_{-}(\kk,t)&=-g_{-}'''(\kk,t)+6\mu q(t)^2 g_{-}'(\kk,t), \label{rHmkdvflow}
\end{align}
\end{lemma}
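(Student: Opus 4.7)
The plan is to derive \eqref{rHmkdvflow} by differentiating, with respect to $q$, the Poisson-commutation identity $\{A(\kk),H_{\textup{mKdV}}\}=0$ and invoking $r(\kk)=\delta A(\kk)/\delta q$ from \eqref{derivAq}. The starting point is Proposition~\ref{PROP:Aprops}.

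First I would upgrade the commutativity $\{A(\kk),A(\vk)\}=0$ to one with $H_{\textup{mKdV}}$: for $\vk\geq\kk$ one has $B_{\dl,\kk}\subset B_{\dl,\vk}$, so that identity holds on $\mathcal{S}\cap B_{\dl,\kk}$. Substituting the asymptotic expansion \eqref{Aexpand} applied to $A(\vk)$ as $\vk\to\infty$,
\[
0=\{A(\kk),A(\vk)\}=\tfrac{\mu}{\vk}\{A(\kk),M\}-\tfrac{\mu}{4\vk^3}\{A(\kk),H_{\textup{mKdV}}\}+\mathcal{O}(\vk^{-5}),
\]
and matching powers of $\vk^{-1}$ yields $\{A(\kk),H_{\textup{mKdV}}\}=0$ on $\mathcal{S}\cap B_{\dl,\kk}$. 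Mass conservation under mKdV ensures that $q(t)\in \mathcal{S}\cap B_{\dl,\kk}$ for all $t$, so Proposition~\ref{PROP:gpr} provides Schwartz-class $r(\kk,t),p(\kk,t),\g(\kk,t)$.

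Setting $V(q):=-q'''+6\mu q^2 q'$, the identity above reads $F(q):=\int r(\kk,q)\,V(q)\,dx\equiv 0$. Differentiating in an arbitrary Schwartz direction $g$, and using $dV|_q(g)=-g'''+12\mu q q' g+6\mu q^2 g'$,
\[
0=dF|_q(g)=\int dr(\kk)|_q(g)\cdot V(q)\,dx+\int r(\kk)\,\big[-g'''+12\mu q q' g+6\mu q^2 g'\big]\,dx.
\]
The Hessian kernel $\delta^2 A(\kk)/(\delta q(x)\,\delta q(y))$ is symmetric in $(x,y)$, so the first integral equals $\int g\cdot dr(\kk)|_q(V(q))\,dx$; when $q=q(t)$ solves mKdV, the chain rule identifies this with $\int g\cdot \dt r(\kk,q(t))\,dx$. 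In the second integral, three integrations by parts (with boundary terms vanishing by Schwartz decay on $\R$ or periodicity on $\T$), together with the cancellation of the two $\pm 12\mu q q' g r$ contributions, produce $\int g\,[r'''-6\mu q^2 r']\,dx$. Summing,
\[
0=\int g(x)\,\big[\dt r(\kk,t)+r'''(\kk,t)-6\mu q(t)^2 r'(\kk,t)\big](x)\,dx\qquad \forall g\in \mathcal{S},
\]
and continuity in $x$ of the bracketed expression forces \eqref{rHmkdvflow} pointwise.

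The main obstacle lies in the term-by-term extraction from the $\vk$-asymptotic expansion, which requires uniform-in-$\vk$ control of the $\mathcal{O}(\vk^{-5})$ remainder in \eqref{Aexpand}. This may be obtained by pushing the Hilbert--Schmidt bounds of Lemma~\ref{LEM:HSq} to the subsequent terms of the series \eqref{Alogdet}. Once that is in place, the remaining computation is essentially mechanical.
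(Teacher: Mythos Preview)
Your argument is correct but follows a genuinely different route from the paper. The paper does not differentiate the Poisson identity; instead it invokes an external Lax-pair computation, namely \cite[Corollary~4.8]{HGKV}, which gives the evolution of $r(\vk)$ under the $A(\kk)$-flow (equation \eqref{rAvk}), and then substitutes the asymptotic expansion \eqref{Aexpand} in $\kk$ to isolate the $H_{\textup{mKdV}}$ contribution. Your approach, by contrast, is purely variational: you regard $\{A(\kk),H_{\textup{mKdV}}\}=0$ as an identity $F(q)\equiv 0$ on $\mathcal{S}\cap B_{\dl,\kk}$, differentiate it in $q$, and use the symmetry of the Hessian $\delta^{2}A(\kk)/\delta q(x)\delta q(y)$ to swap $\int dr|_q(g)\,V(q)$ for $\int g\,dr|_q(V(q))=\int g\,\dt r$. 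The integration-by-parts computation you outline is correct and the $12\mu qq'gr$ terms do cancel exactly as you say.

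The trade-offs: your proof is more self-contained (no Lax pair needed) and exhibits \eqref{rHmkdvflow} as the linearisation of mKdV evaluated on $r$, which is conceptually pleasant. On the other hand, the paper's route simultaneously yields \eqref{rAvk}, which is reused later in Proposition~\ref{PROP:GWPHk} to derive \eqref{rHk}; your method would not produce that formula. Regarding your ``main obstacle'': rather than justifying the term-by-term extraction from the $\vk$-expansion (which requires controlling $\{A(\kk),\mathcal{O}(\vk^{-5})\}$, i.e.\ derivatives of the remainder), it is simpler to verify $\{A(\kk),H_{\textup{mKdV}}\}=0$ directly via the identities \eqref{Gderiv}--\eqref{rderiv}, exactly as the paper does in the proof of Proposition~\ref{PROP:equicty}; that computation is short and avoids any asymptotic subtlety.
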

\begin{proof}
Note that conservation of mass implies that $q(t) \in B_{\dl, \kk}$ for every $t\in \R$ so that $g_{-}(\kk,t)$ exists for every $t\in \R$ and belongs to $\mathcal{S}$ by Proposition~\ref{PROP:gpr}. As to the equation \eqref{rHmkdvflow}, we recall that \cite[Corollary 4.8]{HGKV} implies that for each $\vk\geq 1$ such that $\vk\neq \kk$,  $g_{-}(\vk,t)$ evolves under the $A(\kk)$ flow according to
\begin{align}
\tfrac{d}{dt}g_{-}(\vk)&=\tfrac{2\mu \kk \vk }{\vk^2-\kk^2} \big[ g_{+}(\vk)(\g(\kk)+1)-g_{+}(\kk)(\g(\vk)+1)     \big]. \label{rAvk}
\end{align}
This is a consequence of the Lax pair for the $A(\kk)$-flow discovered in \cite[Proposition 4.7]{HGKV}, which also holds in the real-valued setting and on both the line and the circle. Then, \eqref{rHmkdvflow} follows from \eqref{rAvk} and the asymptotic expansion \eqref{Aexpand}.
\end{proof}

Recall that a subset $Q$ of $H^s$ is equicontinuous if 
\begin{align}
\lim_{h\to 0} \sup_{q\in Q}\|q(x+h)-q(x)\|_{H^s}=0. \label{equicty}
\end{align}
Whilst the conservation of the $L^2$-norm for regular solutions to \eqref{mkdv} has long been known, in this section we demonstrate that bounded orbits of \eqref{mkdv} in $L^2$ are equicontinuous. On the circle, equicontinuity and uniform boundedness is equivalent to pre-compactness. 
The pre-compactness of solutions on the circle in $H^{s}(\T)$ for $s\geq \tfrac{1}{3}$ was shown in \cite{MPV2}.
In the line setting, this was also known from \cite[Proposition 8.1]{HGKV}. We provide an alternative argument using \eqref{Alogdet} and employing some ideas from \cite{KVZ}. Our main goal in this section is to prove the following:

\begin{proposition}[Boundedness and equicontinuity]\label{PROP:equicty}
Let $0\leq s<\frac{1}{2}$.
Given $\dl>0$, there exists $\dl_0>0$ so that for each $\kk\geq 1$, $q\in B_{\dl_{0},\kk} \cap \mathcal{S}$ and for any Hamiltonian flow on $\mathcal{S}$ which conserves $A(\kk)$, we have the a priori estimate
\begin{align}
\|q(t)\|_{H^{s}}\les C(\|q(0)\|_{H^{s}}). \label{Hsbd}
\end{align}
Moreover, if $Q\subseteq B_{\dl, \kk}\cap \mathcal{S}$ is equicontinuous in $H^{s}$, then, for any two Hamiltonians $H_1$ and $H_2$, which Poisson commute with $A(\vk)$ for all $\vk\geq 1$, the set
\begin{align}
Q_{\ast}=\bigg\{ e^{J\nabla(tH_{1}+\tau H_2)}q\, :\, q\in Q, \,\, t,\tau \in \R,\,\, \kk\geq 1  \bigg\}
\label{Qast}
\end{align}
is equicontinuous in $H^{s}$.
\end{proposition}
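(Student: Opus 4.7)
The plan is to build from the generating function $A(\kk, q)$ a functional $I_s(q)$ which, modulo errors controlled by $\|q\|_{L^2}$, is equivalent to $\|q\|_{H^s}^2$, and which is automatically conserved along the flow by virtue of the conservation of $A(\vk, q)$ for each $\vk \geq 1$. This mirrors the Killip--Vi\c{s}an template from \cite{KV, KVZ} adapted to the mKdV generating function.

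\textbf{Quadratic reduction.} The quadratic-in-$q$ part of $A(\kk, q)$ is
\[
A^{[2]}(\kk, q) = \mu \l(\kk) \tr\{(\kk - \dd)^{-1} q (\kk + \dd)^{-1} q\} = \mu \int |\ft q(\eta)|^2 \frac{2\kk}{4\kk^2 + \eta^2}\, d\eta ,
\]
where on the circle the standard identity $\sum_{\xi \in 2\pi \Z} \frac{1}{(\kk - i\xi)(\kk + i(\xi + \eta))} = \frac{\coth(\kk/2)}{2\kk + i\eta}$ exactly cancels the prefactor $\l(\kk) = \tanh(\kk/2)$, producing the identical formula in both geometries (with integration replaced by summation on $\T$). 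Thus $M(q) - \mu \kk A^{[2]}(\kk, q) = \tfrac{1}{2}\int |\ft q(\eta)|^2 \frac{\eta^2}{4\kk^2 + \eta^2}\, d\eta \geq 0$, and the change of variable $\kk = |\eta|u$ in the weighted integral gives, for $0 \leq s < \tfrac{1}{2}$,
\[
\int_1^\infty \kk^{2s-1}\bigl[ M(q) - \mu \kk A^{[2]}(\kk, q) \bigr] d\kk \sim \|q\|_{H^s}^2 + O\bigl(\|q\|_{L^2}^2\bigr),
\]
the upper bound $s < \tfrac{1}{2}$ being precisely what guarantees convergence of the resulting $\eta$-side density $|\eta|^{2s}$ for large $|\eta|$.

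\textbf{Higher-order terms and conservation.} By Lemma~\ref{LEM:HSq}, the $m$-th summand in \eqref{Alogdet} has size $O\bigl(m^{-1}\kk^{-1}(\kk^{-1/2}\|q\|_{L^2})^{2m}\bigr)$; for $q \in B_{\dl, \kk}$ with $\dl$ small, summing the tail $m \geq 2$ yields $|A(\kk, q) - A^{[2]}(\kk, q)| \lesssim \kk^{-3}\|q\|_{L^2}^4$, so $\int_1^\infty \kk^{2s}|A - A^{[2]}|\, d\kk \lesssim \|q\|_{L^2}^4$ for $s < 1$. Defining
\[
I_s(q) := \int_1^\infty \kk^{2s-1}\bigl[ M(q) - \mu \kk A(\kk, q) \bigr] d\kk,
\]
we obtain $I_s(q) = C_s \|q\|_{H^s}^2 + O(\|q\|_{L^2}^2 + \|q\|_{L^2}^4)$ for some $C_s > 0$. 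Since $M$ is conserved (via the leading term of \eqref{Aexpand}) and each $A(\vk, q)$ is conserved along any flow Poisson-commuting with it, $I_s$ is conserved; combined with the $L^2$-bound $\|q\|_{L^2}^2 = 2M(q)$, this yields the a priori estimate~\eqref{Hsbd}.

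\textbf{Equicontinuity.} For the second claim, one works with the high-frequency truncation
\[
I_{s,N}(q) := \int_N^\infty \kk^{2s-1}\bigl[ M(q) - \mu \kk A(\kk, q) \bigr] d\kk,
\]
which is conserved by any flow whose Hamiltonian Poisson-commutes with $A(\vk)$ for $\vk \geq N$; in particular $t H_1 + \tau H_2$ is such a Hamiltonian by linearity of the Poisson bracket, so $e^{J\nabla(tH_1 + \tau H_2)}$ preserves $I_{s,N}$. The same Fourier computation gives $I_{s,N}(q) \sim \|P_{\gtrsim N} q\|_{H^s}^2 + O\bigl(N^{2s-2}(\|q\|_{L^2}^2 + \|q\|_{L^2}^4)\bigr)$, so conservation on the orbit of $q_0$ produces $\|P_{\gtrsim N} q\|_{H^s} \lesssim \|P_{\gtrsim N} q_0\|_{H^s} + O(N^{s-1})$. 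Taking the supremum over $Q_*$ and letting $N \to \infty$ yields equicontinuity from the equicontinuity hypothesis on $Q$. The main technical challenge is the uniform-in-$\kk$ quadratic computation, particularly the parallel treatment of line and circle and the handling of the low-frequency regime $|\eta| \lesssim 1$ together with the borderline logarithmic behavior at $s = 0$; these contribute only lower-order errors in $\|q\|_{L^2}$, which are absorbed by the smallness hypothesis $q \in B_{\dl_0, \kk}$.
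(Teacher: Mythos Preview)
Your overall strategy --- using conservation of $A(\vk)$, isolating the quadratic part, and bounding the remainder by $\kk^{-3}\|q\|_{L^2}^4$ --- is the same as the paper's. The substantive difference is that you integrate $M(q)-\mu\kk A(\kk,q)$ against the continuous weight $\kk^{2s-1}\,d\kk$, whereas the paper forms the \emph{telescoped} quantity $A^{[2]}(\vk)-\tfrac12 A^{[2]}(\vk/2)$, obtaining the multiplier $w(\xi;\vk)=\tfrac{3\vk^2\xi^2}{4(\xi^2+\vk^2)(\xi^2+4\vk^2)}$, and then sums $(\vk N)^{2s}\jb{q,w(\vk N)q}$ over dyadic $N$. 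For $0<s<\tfrac12$ your route is a legitimate alternative and gives the same conclusions (modulo the minor point that your integral must start at $\vk_0\sim 1+\|q\|_{L^2}^2$ rather than at $1$, since $A(\vk,q)$ is only defined for $\vk$ above this threshold; this just changes the polynomial $C$ in \eqref{Hsbd}).

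The genuine gap is at the endpoint $s=0$, precisely the ``borderline logarithmic behavior'' you flag but then dismiss. Your multiplier $\frac{\eta^2}{4\kk^2+\eta^2}$ does not decay as $|\eta|\to\infty$, so
\[
\int_{N}^{\infty}\kk^{-1}\,\frac{\eta^2}{4\kk^2+\eta^2}\,d\kk \;\sim\; \log\!\Big(\tfrac{|\eta|}{N}\Big)\quad\text{for }|\eta|\gg N,
\]
and hence the quadratic part of $I_{0,N}(q_0)$ is comparable to $\sum_{j\ge 0}\|P_{\ge 2^{j}N}q_0\|_{L^2}^2$. Equicontinuity of $Q$ in $L^2$ gives $\sup_{Q}\|P_{\ge M}q_0\|_{L^2}\to 0$, but with no rate; for instance if $\sup_{Q}\|P_{\ge M}q_0\|_{L^2}^2\sim 1/\log M$ the dyadic sum diverges. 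Thus $\sup_{q_0\in Q}I_{0,N}(q_0)\to 0$ is \emph{not} a consequence of the hypothesis, and your equicontinuity argument breaks at $s=0$. This log is \emph{not} a lower-order error absorbable by the smallness $q\in B_{\dl_0,\kk}$: it is a structural failure of the chosen functional to localize in frequency.

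The paper's telescoping is exactly what repairs this: the difference produces the two-sided decaying multiplier $w(\xi;\vk)\sim\min\bigl(\xi^2/\vk^2,\vk^2/\xi^2\bigr)$, so that $\sum_{N}(\vk N)^{2s}w(\xi;\vk N)$ converges geometrically on both sides even at $s=0$ and is uniformly comparable to $\jb{\xi}^{2s}\ind_{\{|\xi|\gtrsim\vk\}}$. An equally simple patch for your argument at $s=0$ is to drop the $\kk$-integration altogether and use the single conserved quantity $M(q)-\mu N A(N,q)$, whose quadratic part $\tfrac12\int\frac{\eta^2}{4N^2+\eta^2}|\ft q|^2$ is already $\sim\|P_{\ge N}q\|_{L^2}^2$ with error $N^{-2}\|q\|_{L^2}^4$.
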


In order to prove Proposition~\ref{PROP:equicty}, we need to recall some preliminary results.
We note that the results in this subsection are valid in either geometry of $\R$ or $\T$, with similar proofs. Thus, for any integral on the frequency side, we take it to mean that the integral is with respect to Lebesgue measure on $\R$ on the line, or counting measure on $\Z$ if on the circle.

It is well known \cite{Pego} that equicontinuity in $H^{s}$ is equivalent to tightness on the Fourier side, and that equicontinuity is the key tool in order to upgrade strong convergence in a low regularity $H^{s}$-space to a higher regularity space. More precisely: 

\begin{lemma}\label{LEM:equiequiv}
Fix $-\infty <\s<s<\infty$. Then: \\
\textup{(i)} a non-empty bounded subset $Q$ of $H^{s}$ is equicontinuous in $H^s $ if and only if 
\begin{align}
\lim_{\kk \to 0} \sup_{q\in Q} \int_{|\xi|\geq \kk} \jb{\xi}^{2s}|\ft q(\xi)|^2 d\xi=0. \label{tightness}
\end{align}
\textup{(ii)} A sequence $\{q_n\}$ is convergent in $H^{s}$ if and only if it is convergent in $H^{\s}$ and
equicontinuous in $H^{s}$.
\end{lemma}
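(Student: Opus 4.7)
The plan is to work entirely on the Fourier side and exploit the identity
\[
\|q(\cdot+h)-q(\cdot)\|_{H^{s}}^{2}=\int |e^{ih\xi}-1|^{2}\jb{\xi}^{2s}|\ft q(\xi)|^{2}d\xi,
\]
which reduces both parts of the lemma to elementary manipulations with the factor $|e^{ih\xi}-1|^{2}=2-2\cos(h\xi)$. (I read \eqref{tightness} as $\kk\to\infty$, which is what makes the statement meaningful.) The rough scheme is standard; nothing uses the integrable structure.

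For part (i), the forward implication is the easy direction: given equicontinuity, decompose $Q$ on the Fourier side into low frequencies $|\xi|\le N$ and high frequencies $|\xi|>N$. The inequality $|e^{ih\xi}-1|^{2}\le \min(|h\xi|^{2},4)$ combined with the average identity
\[
\frac{1}{2\dl}\int_{-\dl}^{\dl} |e^{ih\xi}-1|^{2}\,dh = 2-\frac{2\sin(\dl\xi)}{\dl\xi}\ge c>0 \qquad\text{whenever } |\dl\xi|\ge C_{0},
\]
lets me average the equicontinuity condition in $h\in[-\dl,\dl]$ and conclude that
\[
\int_{|\xi|\ge \kk}\jb{\xi}^{2s}|\ft q(\xi)|^{2}d\xi \les \frac{1}{\dl}\int_{-\dl}^{\dl} \|q(\cdot+h)-q(\cdot)\|_{H^{s}}^{2}dh,
\]
provided $\dl\kk\ge C_{0}$. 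Choosing $\dl=C_{0}/\kk\to 0$ as $\kk\to\infty$ gives tightness uniformly in $q\in Q$. For the converse, split at threshold $N$: on $|\xi|\le N$ use $|e^{ih\xi}-1|\le |h|N$ together with boundedness in $H^{s}$; on $|\xi|>N$ use $|e^{ih\xi}-1|\le 2$ together with \eqref{tightness}. First pick $N$ large to kill the high-frequency tail uniformly in $Q$, then let $h\to 0$ to kill the low-frequency piece.

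For part (ii), the forward direction is automatic: $H^{s}\embeds H^{\s}$ takes convergent sequences to convergent sequences, and any convergent sequence in $H^{s}$ is equicontinuous in $H^{s}$ (tails uniformly small for a finite set plus the single limit, which is a compact set in $H^{s}$, hence equicontinuous by part (i) applied to $\{q_{n}\}\cup\{q\}$). For the converse, suppose $\{q_{n}\}$ converges in $H^{\s}$ and is equicontinuous in $H^{s}$. The plan is to show $\{q_{n}\}$ is Cauchy in $H^{s}$ by the standard split
\[
\|q_{n}-q_{m}\|_{H^{s}}^{2}\le \int_{|\xi|\le \kk}\jb{\xi}^{2s}|\ft q_{n}-\ft q_{m}|^{2}d\xi +\int_{|\xi|\ge \kk}\jb{\xi}^{2s}\bigl(|\ft q_{n}|^{2}+|\ft q_{m}|^{2}\bigr)d\xi.
\]
The first piece is bounded by $\jb{\kk}^{2(s-\s)}\|q_{n}-q_{m}\|_{H^{\s}}^{2}$, which tends to $0$ as $n,m\to\infty$ for each fixed $\kk$; the second is uniformly small by the tightness output of part (i). Choosing $\kk$ first large and then $n,m$ large gives the Cauchy property.

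I do not anticipate a real obstacle. The only subtlety is being careful in the forward direction of (i) about what ``equicontinuous'' actually buys: the averaged-in-$h$ form is the cleanest way to extract decay of the tail, since the pointwise bound $|e^{ih\xi}-1|^{2}$ degenerates at the resonance $h\xi\in 2\pi\Z$ in the circle setting. Using the average $\frac{1}{2\dl}\int_{-\dl}^{\dl}\cdots dh$ removes that degeneracy uniformly, and the same argument works verbatim on $\R$ and on $\T$.
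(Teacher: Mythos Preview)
Your proof is correct. The paper does not actually prove this lemma---it is stated as a standard fact and used without justification---so there is nothing to compare against; your argument supplies the expected details, and the averaging-in-$h$ device in part~(i) is a clean way to treat $\R$ and $\T$ uniformly and sidestep the resonance $h\xi\in 2\pi\Z$ on the circle. Two cosmetic points: you rightly read \eqref{tightness} with $\kk\to\infty$ (the $\kk\to 0$ in the statement is a typo), and in the displayed splitting for part~(ii) the high-frequency term should carry a harmless factor of~$2$ from $|a-b|^{2}\le 2|a|^{2}+2|b|^{2}$.
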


We use a number of equivalent ways to express equicontinuity in $H^{s}$. To this end, we first state a useful lemma.
\begin{lemma}\label{LEM:equiN}
Let $-1<s<1$ and consider the Fourier multiplier operator $w(-i\dd,\kk)$ with multiplier
\begin{align}
w(\xi;\kk)=\tfrac{\kk^2}{\xi^2+4\kk^2}-\tfrac{(\kk/2)^{2}}{\xi^2+\kk^2}=\tfrac{3\kk^2\xi^2}{4(\xi^2+\kk^2)(\xi^2+4\kk^2)}. \label{w}
\end{align} Then: \\
\textup{(i)} For any $f\in \mathcal{S}$ and uniformly in $\kk\geq 1$, we have
\begin{align}
 \sum_{ N\in 2^{\mathbb{N}}}(\kk N)^{2s} \jb{f,w(-i\dd, \kk N)f} \les \|f\|_{H^{s}}^{2}\les\|f\|_{H^{-1}}^{2}+\kk^{2}  \sum_{ N\in 2^{\mathbb{N}}}N^{2s} \jb{f,w(-i\dd, \kk N)f}. \label{wbelow}
\end{align}
\textup{(ii)}  a non-empty bounded subset $Q$ of $H^{s}$ is equicontinuous if and only if 
\begin{align}
\lim_{\kk\to \infty} \sup_{q\in Q} \sum_{ N\in 2^{\NB}}(\kk N)^{2s} \jb{q,w(-i\dd, \kk N)q}=0 \label{equicond}
\end{align} 
\end{lemma}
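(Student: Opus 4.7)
The proof is essentially a Fourier-side analysis of the dyadic sum of the band-pass multiplier $w(\cdot;\kk N)$. Throughout, I will work pointwise with the symbols
\[
F_{\kk}(\xi):=\sum_{N\in 2^{\NB}}(\kk N)^{2s}w(\xi;\kk N)\quad\text{and}\quad \widetilde F_{\kk}(\xi):=\kk^{2}\sum_{N\in 2^{\NB}}N^{2s}w(\xi;\kk N),
\]
and then apply Plancherel. The key observation is that \eqref{w} makes $w(\cdot;M)$ a band-pass concentrated at $|\xi|\sim M$: one reads off directly that $w(\xi;M)\les 1$ everywhere, $w(\xi;M)\sim M^2/\xi^2$ for $|\xi|\gg M$, $w(\xi;M)\sim \xi^2/M^2$ for $|\xi|\ll M$, and $w(\xi;M)\gtrsim 1$ whenever $|\xi|\sim M$.

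For part (i), I will prove the two pointwise symbol bounds
\[
F_{\kk}(\xi)\les \jb{\xi}^{2s}\ \text{for all }\xi,\qquad \widetilde F_{\kk}(\xi)\gtrsim \jb{\xi}^{2s}\ \text{for }|\xi|\geq 1,
\]
from which the two inequalities follow at once by integrating against $|\ft f(\xi)|^{2}$ and using $\int_{|\xi|\leq 1}\jb{\xi}^{2s}|\ft f|^{2}\les\|f\|_{H^{-1}}^{2}$ for the low-frequency part. Each symbol bound is obtained by splitting the dyadic sum into the three regimes $\kk N\ll|\xi|$, $\kk N\sim|\xi|$, $\kk N\gg|\xi|$ using the band-pass estimates on $w$. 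The regime $\kk N\sim|\xi|$ contributes the main term of order $|\xi|^{2s}$; the regimes $\kk N\ll|\xi|$ and $\kk N\gg|\xi|$ produce geometric series in $N^{2s+2}$ and $N^{2s-2}$, respectively, which are exactly where the restrictions $s>-1$ and $s<1$ enter. For $\widetilde F_{\kk}$, the extra $\kk^{2-2s}\geq 1$ (since $\kk\geq 1$, $s<1$) ensures the lower bound $\widetilde F_{\kk}(\xi)\gtrsim |\xi|^{2s}$ for $|\xi|\gtrsim \kk$, while for $1\les|\xi|\ll \kk$ the $N=1$ term alone gives $\widetilde F_{\kk}(\xi)\gtrsim \xi^{2}\gtrsim |\xi|^{2s}$.

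Part (ii) is a standard consequence of (i) together with the tightness characterization \eqref{tightness} from Lemma~\ref{LEM:equiequiv}. For the forward direction, given $\eps>0$, pick $R>0$ with $\sup_{Q}\int_{|\xi|\geq R}\jb{\xi}^{2s}|\ft q|^{2}<\eps$ and split $\int F_{\kk}|\ft q|^{2}$ at $|\xi|=R$: the high-frequency piece is controlled by $\eps$ via $F_{\kk}\les \jb{\xi}^{2s}$, while the low-frequency piece benefits from the stronger estimate $w(\xi;\kk N)\les \xi^{2}/(\kk N)^{2}$ valid when $\kk N\geq \kk\geq R\geq |\xi|$, giving $F_{\kk}(\xi)\les \xi^{2}\kk^{2s-2}$ on $|\xi|\leq R$; since $s<1$, this vanishes as $\kk\to\infty$ and absorbs the $H^{s}$-mass of $q$. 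For the converse, the pointwise lower bound $F_{\kk}(\xi)\gtrsim \jb{\xi}^{2s}$ on $\{|\xi|\geq \kk\}$ gives $\int_{|\xi|\geq \kk}\jb{\xi}^{2s}|\ft q|^{2}\les \sum_{N}(\kk N)^{2s}\jb{q,w(-i\dd,\kk N)q\rangle$, so the hypothesis \eqref{equicond} immediately yields \eqref{tightness}.

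I expect the main technical step to be Step~1, the bookkeeping of the three dyadic regimes for both $F_{\kk}$ and $\widetilde F_{\kk}$; in particular, obtaining $\widetilde F_{\kk}\gtrsim \jb{\xi}^{2s}$ uniformly in $\kk\geq 1$ requires handling the gap between $1\les |\xi|\ll \kk$ (where only the smallest scales $N\sim 1$ give the dominant contribution and one cannot find a dyadic $N$ with $\kk N\sim|\xi|$) and $|\xi|\gtrsim \kk$ (where a resonant dyadic shell exists). This is exactly why the $\|f\|_{H^{-1}}^{2}$ term is needed to absorb the truly low frequencies $|\xi|\les 1$ in the second inequality of (i).
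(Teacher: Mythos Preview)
Your proposal is correct and follows essentially the same route as the paper. For part~(i) the paper simply cites \cite[Lemma~3.5]{KVZ}, whereas you supply the underlying dyadic symbol analysis of $F_{\kk}$ and $\widetilde F_{\kk}$; for part~(ii) both you and the paper argue via a high/low frequency split (the paper phrases it with sharp projections $P_{<\vk}$, $P_{\geq \vk}$ at an auxiliary threshold, you split the integral at $|\xi|=R$), using the upper bound from (i) on the high-frequency piece and the band-pass decay $w(\xi;\kk N)\les \xi^{2}/(\kk N)^{2}$ on the low-frequency piece, and the converse via the lower bound $w(\xi;M)\gtrsim 1$ on $|\xi|\sim M$.
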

\begin{proof}
All of part (i) was shown in \cite[Lemma 3.5]{KVZ}. For (ii), we first suppose that \eqref{equicond} holds. Then
\begin{align*}
\| P_{ \geq \kk} q\|_{H^{s}}^{2} & \sim \sum_{N\in 2^{\NB}} \int_{|\xi|\sim \kk N} (\kk N)^{2s}|\ft q(\xi)|^{2}d\xi \les  \sum_{N\in 2^{\NB}} (\kk N)^{2s} \jb{ P_{\geq \kk} q, w(-i\dd,\kk N)P_{\geq \kk}q},
\end{align*}
where $P_{<\kk}$ is the sharp projection onto frequencies $\{ |\xi|<\kk\}$ and $P_{\geq \kk}:=\Id -P_{<\kk}$.
Hence, \eqref{equicond} implies $Q$ is equicontinuous in $H^s$. For the reverse direction, given $\vk \geq 1$, \eqref{wbelow} implies 
\begin{align*}
\sum_{ N\in 2^{\NB}}(\kk N)^{2s} \jb{P_{\geq \vk} q,w(-i\dd, \kk N)P_{\geq \vk}q} \les \|P_{\geq \vk}q\|_{H^{s}}^{2}.
\end{align*}
On the other hand, for $0\leq s<1$,
\begin{align*}
\sum_{ N\in 2^{\NB}}(\kk N)^{2s} \jb{P_{< \vk} q,w(-i\dd, \kk N)P_{< \vk}q} 
& \les  \tfrac{\vk^{2}}{\kk^{2-2s}}\bigg( \sum_{ N\in 2^{\NB}}N^{-(2-2s)}\bigg) \int_{|\xi|<\vk} |\ft q(\xi)|^{2}d\xi \\
& \les  \tfrac{\vk^{2}}{\kk^{2-2s}}\sup_{q\in Q}\|q\|_{H^{s}}^{2}
\end{align*}
while if $-1<s<0$, 
\begin{align*}
\sum_{ N\in 2^{\NB}} (\kk N)^{2s} \jb{P_{< \vk} q,w(-i\dd, \kk N)P_{< \vk}q} \les \tfrac{\vk^{2|s|}}{\kk^{2|s|}} \sup_{q\in Q}\|q\|_{H^{s}}^{2}
\end{align*}
In either case, there exist $\be_1(s), \be_2(s)>0$ such that 
\begin{align*}
\sum_{ N\in 2^{\NB}}( \kk N)^{2s} \jb{ q,w(-i\dx, \kk N)q} \les \tfrac{\vk^{\be_1(s)}}{\kk^{\be_2(s)}} \sup_{q\in Q}\|q\|_{H^{s}}^{2} + \|P_{\geq \vk}q\|_{H^{s}}^{2}.
\end{align*}
Using the equicontinuity of $Q$, we now obtain \eqref{equicond}.
\end{proof}

\begin{lemma}\label{LEM:equ2}
Let $0\leq s<1$. A non-empty bounded subset of $Q$ of $H^{s}$ is equicontinuous in $H^{s}$ if and only if \begin{align}
  \lim_{\kk\to \infty} \sup_{q\in Q} \int \tfrac{\xi^4}{(\xi^2+4\kk^2)^{2-s}}  |\ft q(\xi)|^{2}d\xi =0.   \label{equicond5}
\end{align} 
\end{lemma}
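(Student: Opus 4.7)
The plan is to reduce to the tightness characterization of equicontinuity in Lemma~\ref{LEM:equiequiv}(i) and then to compare the tail $\int_{|\xi|\geq \vk}|\xi|^{2s}|\ft q(\xi)|^2\,d\xi$ with the integral in \eqref{equicond5} via two elementary pointwise bounds on the multiplier $m_\kk(\xi) := \xi^4/(\xi^2+4\kk^2)^{2-s}$. The crucial observation is that $m_\kk$ interpolates smoothly: globally one has $m_\kk(\xi) \leq |\xi|^{2s}$ (using only $s \geq 0$), while on $\{|\xi|\geq 2\kk\}$ one has the reverse $m_\kk(\xi) \geq 2^{-(2-s)}|\xi|^{2s}$, since $\xi^2+4\kk^2 \leq 2\xi^2$ there.

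For the forward direction, fix $\eps > 0$ and use equicontinuity of $Q$ to choose $\vk_0 \geq 1$ with $\sup_{q\in Q}\int_{|\xi|\geq \vk_0}|\xi|^{2s}|\ft q|^2\,d\xi \leq \eps/2$. Split the integral in \eqref{equicond5} at $|\xi|=\vk_0$. The high-frequency part is dominated by the tail $\int_{|\xi|\geq \vk_0}|\xi|^{2s}|\ft q|^2\,d\xi \leq \eps/2$ by the pointwise upper bound. For the low-frequency part, the crude estimate $m_\kk(\xi) \leq \vk_0^4\,(4\kk^2)^{s-2}$ for $|\xi|<\vk_0$ together with the $L^2$-boundedness of $Q$ gives
\[
\int_{|\xi|<\vk_0} m_\kk(\xi)|\ft q(\xi)|^2 \,d\xi \les \vk_0^4\,\kk^{-(4-2s)} \sup_{q\in Q}\|q\|_{H^s}^2.
\]
Since $0 \leq s < 1$ forces $4-2s > 0$, this vanishes as $\kk\to\infty$, so it is $< \eps/2$ once $\kk$ is sufficiently large.

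For the reverse direction, the pointwise lower bound on $\{|\xi|\geq 2\kk\}$ gives
\[
\int_{|\xi|\geq 2\kk} |\xi|^{2s}|\ft q(\xi)|^2 \,d\xi \leq 2^{2-s}\int m_\kk(\xi)|\ft q(\xi)|^2\,d\xi,
\]
so taking $\vk = 2\kk$ and letting $\kk\to\infty$, the tightness condition of Lemma~\ref{LEM:equiequiv}(i) follows immediately from \eqref{equicond5}, yielding equicontinuity of $Q$.

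The proof is routine and presents no real obstacle once the two pointwise bounds on $m_\kk$ are in hand; it is essentially a frequency decomposition argument. The regularity constraint $0 \leq s < 1$ is used only lightly: $s \geq 0$ to ensure $m_\kk(\xi)\leq |\xi|^{2s}$, and $s < 1$ (in fact $s < 2$ would suffice) to guarantee the $\kk^{-(4-2s)}$ decay of the low-frequency contribution.
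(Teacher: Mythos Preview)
Your proof is correct and essentially the same as the paper's. The paper also uses a low/high frequency decomposition for the forward direction, and for the converse it invokes the pointwise inequality $\tfrac{2\xi^2}{\xi^2+4\kk^2} \gtrsim 1-\chi_{[-\kk,\kk]}(\xi)$, which is the same lower bound on $m_\kk$ that you obtain (with the immaterial difference that the paper cuts at $|\xi|\geq \kk$ rather than $|\xi|\geq 2\kk$). One small slip: your global bound $m_\kk(\xi)\leq |\xi|^{2s}$ actually uses $s<2$ (so that $2-s>0$), not $s\geq 0$; but since you note this yourself in the final paragraph, the argument stands.
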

The proof of this lemma is quite standard with the forward direction following from low and high frequency decomposition as in the proof of (ii) in Lemma~\ref{LEM:equiN}, and the converse follows from the inequality $\tfrac{2\xi^2}{\xi^2+4\kk^2} \ges 1-\chi_{[-\kk,\kk]}(\xi)$.
We note that Lemma~\ref{LEM:equ2} remains true with the condition \eqref{equicond5} replaced by the condition
\begin{align}
\lim_{\kk\to \infty} \sup_{q\in Q}\int \tfrac{\xi^2}{(\xi^2+4\kk^2)^{1-s}}|\ft q(\xi)|^{2}d\xi =0. \label{tightness2}
\end{align}

\begin{proof}[Proof of Proposition~\ref{PROP:equicty}]
We let $\dl>0$ be small enough so that all prior results hold for any $q(0)\in B_{\dl_{0}, \kk} \cap \mathcal{S}$, where $\dl_0 =\frac{1}{2}\dl$.
We first show that $A(\vk)$ is conserved under the $H_{\text{mKdV}}$ flow and the mass $M$ flow. Consider the $M$ flow: by \eqref{Gderiv}, \eqref{pderiv} and \eqref{derivAq}, we have
\begin{align*}
\{ M, A(\vk)\}=\int qr'(\vk) dx= -\int 2\vk q g_{+}(\vk)dx=-\vk\int \g'(\vk)dx=0. 
\end{align*}
Using \eqref{derivAq}, \eqref{Gderiv}, \eqref{pderiv} and \eqref{rderiv}, we have  
\begin{align}
\{ H_{\text{mKdV}}, A(\vk)\} =\int (-q''+2\mu q^3)g_{-}'(\vk) dx  = -2\vk \int q' g_{+}'(\vk) dx -4\mu \vk \int q^3 g_{+}(\vk)dx. \label{HA}
\end{align}
Now we examine the first term. Using \eqref{pderiv}, \eqref{rderiv} and \eqref{Gderiv},
\begin{align*}
-2\vk \int q' g_{+}'(\vk) dx & = 4\vk^2 \int q' g_{-}(\vk)dx -4\mu \vk \int q' q(\g(\vk)+1)dx \\
& =- 4\vk^2 \int q g_{-}'(\vk)dx -2\mu \vk \int (q^2)' \g(\vk)dx  \\
& = 8\vk^3 \int qg_{+}(\vk) dx +2\mu \vk \int q^2 \g'(\vk)dx \\
& = 4 \vk^{3} \int \g'(\vk)dx  +4\mu \vk \int q^3 g_{+}(\vk)dx. 
\end{align*}
Returning this to \eqref{HA}, we see that $\{ H_{\text{mKdV}}, A(\vk)\}=0$.

To deduce the a priori bound and the equicontinuity claims, we exploit the conservation of $A(\kk)$ under any Hamiltonian flow which commutes with $A(\kk)$. Given $q\in \mathcal{S}$, we let $\vk_0= 100(1+\|q\|_{L^2}^{2}) $ and define
\begin{align*}
A^{[2]}(\vk,q):= \mu \l(\kk) \tr\{ (\vk-\dd)^{-1}q(\vk+\dd)^{-1}q\} \quad \text{and} \quad A^{[\geq 4]}(\vk,q):=A(\vk,q)-A^{[2]}(\vk,q),
\end{align*}
for any $\vk \geq \vk_0$. The existence of $A(\vk,q)$ follows from the choice of $\vk_0$ and Lemma~\ref{LEM:HSq}.
We may compute $A^{[2]}(\vk)$ explicitly, since
\begin{align*}
 \l(\vk)\tr\{ (\vk-\dd)^{-1}q(\vk+\dd)^{-1}q\}= \int \tfrac{|\ft q(\xi)|^{2}}{2\vk-i\xi}d\xi = \int \tfrac{2\vk |\ft q(\xi)|^{2}}{\xi^2 +4\vk^2}d\xi,
\end{align*}
where in the second equality we exploited the fact that $q$ is real-valued.
Therefore
\begin{align}
A^{[2]}(\vk,q)-\tfrac{1}{2} A^{[2]}(\tfrac{\vk}{2},q) =\tfrac{2\mu }{\vk} \jb{q, w(-i\dd,\vk)q}. \label{rho2w}
\end{align}
Recalling \eqref{Alogdet}, Lemma~\ref{LEM:HSq} implies
\begin{align}
|A^{[\geq 4]}(\vk,q)| & \les  \vk^{-2}\|q\|_{L^2}^{4}. \label{rho4w}
\end{align} 
Fix $\kk\geq 1$, let $q(0)\in B_{\dl, \kk}\cap \mathcal{S}$, $\vk\geq \vk_0$ and let $q(t)$ be the corresponding solutions in $\mathcal{S}$.
In view of Proposition~\ref{PROP:Aprops}, $A(\vk, q(t))$ is conserved and by commutativity of the mass $M$, we have that $q(t)\in B_{\dl,\kk}\cap \mathcal{S}$ for all $t\in \R$. Moreover, \eqref{rho4w} also holds uniformly in $t\in \R$. 

Now, the conservation of $A(\kk)$, \eqref{rho2w} and \eqref{rho4w} imply that for any $\vk\geq \vk_0$,
\begin{align}
\tfrac{2}{\vk}\jb{q(t), w(-i\dd,\vk)q(t)}  
 & \leq   |A^{[\geq 4]}(\vk,q(t))-\tfrac{1}{2}A^{[\geq 4]}(\tfrac{\vk}{2},q(t)) | \notag  \\
 &\hphantom{X}  +|A^{[2]}(\vk,q(0))-\tfrac{1}{2} A^{[2]}(\tfrac{\vk}{2},q(0))|+ \tfrac{2 }{\vk} \jb{q(0), w(-i\dd,\vk)q(0)}   \notag \\
 & \leq C\vk^{-2}\|q(0)\|_{L^2}^{4} +\tfrac{2}{\vk}\jb{q(0), w(-i\dd,\vk)q(0)}. \label{equi1}
\end{align}
Given $N\in 2^{\mathbb{N}}$, we replace $\vk$ by $\vk N$ in \eqref{equi1} and apply \eqref{wbelow} to obtain 
\begin{align*}
\|q(t)\|_{H^s}^{2}\les \|q(0)\|_{L^2}^{2}+\vk \|q(0)\|_{L^2}^{4}+\vk^{2-2s}\|q(0)\|_{H^{s}}^2,
\end{align*}
provided that $s<\frac{1}{2}$. Choosing $\vk=\vk_0$, we arrive at \eqref{Hsbd}.

For the equicontinuity claim, we return to \eqref{equi1} and replace $\vk$ by $\vk N$, multiply by $(\vk N)^{2s}$ and sum over $N\in 2^{\mathbb{N}}$ to obtain 
\begin{align*}
\sum_{ N\in 2^{\NB}}(\vk N)^{2s} \jb{ q(t),w(-i\dd, \vk N)q(t)} &\les \vk^{-1+2s}\|q(0)\|_{L^2}^{4} +  \sum_{ N\in 2^{\NB}}(\vk N)^{2s} \jb{ q(0),w(-i\dd, \vk N)q(0)}.
\end{align*}
Now, Lemma~\ref{LEM:equiN} yields the claimed equicontinuity.
\end{proof}

\section{Well-posedness in $H^{s}$, $s\geq 0$}\label{SEC:Hk}

The expansion \eqref{Aexpand} motivates us to define
\begin{align}
H_{\kk}(q):=4\kk^2 M(q)-4\kk^{3}\mu A(\kk, q). \label{Hkk}
\end{align}
 Under the Poisson bracket \eqref{Poisson},
we have 
\begin{align*}
\{ H_{\kk},H_{\text{mKdV}}\}=4\kk^{2} \{ M, H_{\text{mKdV}}\}-4\kk^{3}\mu \{ A(\kk), H_{\text{mKdV}}\}=0,
\end{align*}
since $M$ and $A(\kk)$ Poisson commute with $H_{\text{mKdV}}$. We use the $H_{\kk}$-flow to approximate the $H_{\text{mKdV}}$-flow, as heuristically, \eqref{Aexpand} implies 
$H_{\kk}=H_{\text{mKdV}}+\mathcal{O}(\kk^{-2})$ for large $\kk$.

\begin{proposition}[Well-posedness of the approximating flow] \label{PROP:GWPHk}
There exists $\dl>0$ so that for all $\kk\geq 1$, the Hamiltonian flow induced by $H_{\kk}$, namely
\begin{align}
\tfrac{d}{dt}q &=4\kk^{2}q'-4\mu \kk^{3} g_{-}'(\kk),
  \label{Hkflow}
\end{align}
is globally well-posed for initial data in $B_{\dl, \kk}$. These solutions conserve $A(\vk)$ for every $\vk\geq 1$ and whenever the initial data belongs to $\mathcal{S}$, then the solutions do too.
Moreover, under the $H_{\kk}$ flow, we have 
\begin{align}
\tfrac{d}{dt}g_{-}(\vk)&=4\kk^2 g_{-}'(\vk)+\tfrac{8\vk \kk^4}{\kk^2-\vk^2}\big[ g_{+}(\vk)(\g(\kk)+1)-g_{+}(\kk)(\g(\vk)+1)\big]. \label{rHk}
%\frac{d}{dt}g_{+}(\vk)& =4\kk^2 g_{+}'(\vk) -\frac{8 \kk^4 }{\vk^2-\kk^2} \big[ \vk g_{-}(\vk) (\g(\kk)+1)-\kk g_{-}(\kk) (\g(\vk)+1) \big]
\end{align}
\end{proposition}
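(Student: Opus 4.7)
The plan is to treat \eqref{Hkflow} as an ODE on $L^{2}$ and run a Picard contraction via Duhamel's formula with linear part the unitary translation group $e^{4\kk^{2}t\dd_{x}}$ and nonlinear term $F_{\kk}(q):=-4\mu\kk^{3}r'(\kk,q)$. The central ingredient is the Lipschitz estimate
\[
\|F_{\kk}(q_{1})-F_{\kk}(q_{2})\|_{L^{2}} \les C(\kk,\dl)\,\|q_{1}-q_{2}\|_{L^{2}}, \qquad q_{1},q_{2}\in B_{\dl,\kk},
\]
which follows from the real-analytic diffeomorphism property in Proposition~\ref{PROP:gpr}: by \eqref{rh2}, the map $q\mapsto \tfrac{\mu}{4\kk}r(\kk,q)$ is $C^{1}$ from $B_{\dl,\kk}$ into $H^{2}_{\kk}$, so its differential is uniformly bounded and hence $q\mapsto r'(\kk,q)$ is Lipschitz $B_{\dl,\kk}\to L^{2}$ after shrinking $\dl$ once for all $\kk\geq 1$. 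Standard contraction in $C([0,T];L^{2})$, for $T=T(\kk,\dl)>0$, then yields local existence and uniqueness.

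To extend globally and maintain the ball condition we verify mass conservation, which at the Poisson-bracket level reads $\{M,H_{\kk}\}=4\kk^{2}\{M,M\}-4\mu\kk^{3}\{M,A(\kk)\}=0$, the second bracket being computed explicitly in the proof of Proposition~\ref{PROP:equicty}. For Schwartz initial data, persistence of Schwartz regularity follows from the $\mathcal{S}$-mapping property in Proposition~\ref{PROP:gpr} and a straightforward bootstrap using \eqref{smoothest} and \eqref{decays}, whence mass conservation rigorously keeps smooth solutions in $B_{\dl,\kk}$ and they extend globally in time. Density of $\mathcal{S}\cap B_{\dl,\kk}$ in $B_{\dl,\kk}$, together with the Lipschitz stability from the contraction argument, then extends global existence and mass conservation to all of $B_{\dl,\kk}$. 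Conservation of $A(\vk)$ (for $\vk\geq 1$ at which $A(\vk)$ is defined on $B_{\dl,\kk}$) follows analogously from $\{A(\vk),H_{\kk}\}=4\kk^{2}\{A(\vk),M\}-4\mu\kk^{3}\{A(\vk),A(\kk)\}=0$, where $\{A(\vk),M\}=0$ parallels the $\{M,A(\vk)\}=0$ computation and $\{A(\vk),A(\kk)\}=0$ is Proposition~\ref{PROP:Aprops}; continuity of $A(\vk)$ on $L^{2}$, via the absolutely convergent series \eqref{Alogdet}, transfers the conservation from Schwartz to rough data.

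Finally, the identity \eqref{rHk} is derived by linearity of the Poisson bracket applied to the splitting $H_{\kk}=4\kk^{2}M-4\mu\kk^{3}A(\kk)$. Under the $M$-flow, translation invariance gives $\dt r(\vk)=r'(\vk)$; under the $A(\kk)$-flow, the Lax-pair-based identity \eqref{rAvk} (which, as noted after \eqref{rAvk}, carries over to our real-valued setting on both $\R$ and $\T$) supplies the explicit nonlinear contribution. Summing with coefficients $4\kk^{2}$ and $-4\mu\kk^{3}$ respectively yields \eqref{rHk}. The chief technical obstacle is the uniform-in-$\kk$ Lipschitz bound in the first paragraph, which leans crucially on the two-derivative gain $q\mapsto\tfrac{\mu}{4\kk}r(\kk)\in H^{2}_{\kk}$ from Proposition~\ref{PROP:gpr}; a secondary one is the careful approximation argument needed to transport conservation laws from Schwartz data to general $L^{2}$ data in $B_{\dl,\kk}$.
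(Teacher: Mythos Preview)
Your proposal is correct and follows essentially the same approach as the paper's own proof: local well-posedness via the Lipschitz property of $q\mapsto r'(\kk,q)$ from $B_{\dl,\kk}$ into $L^{2}$ (the paper phrases this as Cauchy--Lipschitz after the Galilean change of variables $(t,x)\mapsto(t,x-4\kk^{2}t)$, which is equivalent to your Duhamel formulation), Schwartz persistence from \eqref{smoothest} and \eqref{decays}, and the derivation of \eqref{rHk} by combining the transport contribution of the $M$-flow with \eqref{rAvk} for the $A(\kk)$-flow. The only minor difference is that for the global-in-time extension you invoke mass conservation directly to keep the solution in $B_{\dl,\kk}$, whereas the paper packages this step as an appeal to the a~priori bounds of Proposition~\ref{PROP:equicty}; both are valid, and your route is the more elementary one for the $L^{2}$ setting.
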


\begin{proof} The equation \eqref{Hkflow} follows from \eqref{derivAq} which gives 
\begin{align*}
 \tfrac{\dl H_{\kk}}{\dl q}=4\kk^{2}q-4\kk^{3}\mu g_{-}(\kk).
\end{align*}
Then, local well-posedness of \eqref{Hkflow} follows by first using the convenient change of variables $(t,x)\to (t,x-4\kk^2 t)$ and
%\begin{align*}
%q(t,x)=q(0,x+4\kk^2 t) -4\mu \kk^{3}\int_{0}^{t} g_{-}'(x+4\kk^2(t-t'); \kk, q(t'))dt.
%\end{align*}
noting that, by the diffeomorphism property of $g_{-}(\kk)$, the nonlinearity is Lipschitz from $B_{\dl,\kk}$ into $H^{2}$ so that the Cauchy-Lipschitz theorem applies. The estimates \eqref{smoothest} and \eqref{decays} imply that the solutions belong to $\mathcal{S}$ if the initial data do.

Global well-posedness follows from the conservation of $A(\vk)$ under the $H_{\kk}$ flow with the specific a priori bounds from Proposition~\ref{PROP:equicty}.

The claim that the solutions conserve $A(\vk)$ follows from Proposition~\ref{PROP:Aprops}. Finally, the equation \eqref{rHk} follows from \eqref{rAvk} and that the evolution $g_{-}(\vk)$ under the $M$-flow is simply linear transport in $x$ at unit speed.
\end{proof}

The key stepping stone to the proof of Theorem~\ref{THM:GWP} is the following result which makes precise how we use the formal idea that $H_{\kk}$ approximates $H_{\text{mKdV}}$ for large $\kk$.

\begin{proposition}\label{PROP:diffflow}
Let $\dl>0$ be sufficiently small, $\kk\geq 1$, $\vk\geq 4$ and $Q\subseteq B_{\dl, \kk}\cap \mathcal{S}$ be equicontinuous in $L^2$. Then, 
\begin{align}
\lim_{\kk\to \infty}\sup_{q\in Q}\sup_{|t|\leq T}   \| g_{-}( \vk ; e^{tJ\nabla(H_{\textup{mKdV}}-H_{\kk})}q)-g_{-}(\vk; q)\|_{H^{2}}=0. \label{H2limit}
\end{align}
\end{proposition}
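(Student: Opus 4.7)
The plan is to use the fundamental theorem of calculus. Setting $q(s) := e^{sJ\nabla(H_{\textup{mKdV}} - H_\kk)}q$, we have
\[
r(\vk; q(t)) - r(\vk; q) = \int_0^t \tfrac{d}{ds} r(\vk; q(s)) \, ds.
\]
Proposition~\ref{PROP:equicty}, applied with $H_1 = H_{\textup{mKdV}}$ and $H_2 = H_\kk$ (both of which Poisson commute with $A(\vk)$ for every $\vk \geq 1$), guarantees that the orbit $\{q(s) : q \in Q, \, |s| \leq T, \, \kk \geq 1\}$ is equicontinuous in $L^2$. It therefore suffices to show that $\|\tfrac{d}{ds} r(\vk; q(s))\|_{H^2} \to 0$ as $\kk \to \infty$ uniformly on this family.

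Subtracting \eqref{rHk} from \eqref{rHmkdvflow} yields
\[
\tfrac{d}{ds} r(\vk) = -r'''(\vk) + 6\mu q^2 r'(\vk) - 4\kk^2 r'(\vk) - \tfrac{8\vk\kk^4}{\kk^2 - \vk^2}\bigl[p(\vk)(\g(\kk) + 1) - p(\kk)(\g(\vk) + 1)\bigr].
\]
Iterating \eqref{Gderiv}--\eqref{rderiv} (using $r''(\vk) = 4\vk^2 r(\vk) - 4\mu\vk q(\g(\vk)+1)$) produces the identity
\[
-r'''(\vk) = -4\vk^2 r'(\vk) + 4\mu\vk q'(\g(\vk)+1) - 4\mu q^2 r'(\vk),
\]
which together with $r'(\vk) = -2\vk p(\vk)$ (used to absorb the $-4\kk^2 r'(\vk)$ term) leads, after regrouping, to
\[
\tfrac{d}{ds} r(\vk) = \tfrac{4\vk^4}{\kk^2 - \vk^2} r'(\vk) + I(\kk) + II(\kk),
\]
with
\[
I(\kk) := 2\mu q^2 r'(\vk) - \tfrac{8\vk\kk^4}{\kk^2 - \vk^2} p(\vk)\g(\kk), \qquad II(\kk) := 4\mu\vk q'(\g(\vk)+1) + \tfrac{8\vk\kk^4}{\kk^2 - \vk^2} p(\kk)(\g(\vk)+1).
\]

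The central step is the cancellation analysis inside $I(\kk)$ and $II(\kk)$. Decomposing $\g(\kk) = \g^{[2]}(\kk) + \g^{[\geq 4]}(\kk)$ via \eqref{g2term} and $p(\kk) = p^{[1]}(\kk) + p^{[3]}(\kk) + p^{[\geq 5]}(\kk)$ via \eqref{pterms}, and applying the operator identities $2\kk(2\kk\mp\dd)^{-1} = I \pm \dd(2\kk\mp\dd)^{-1}$ and $\tfrac{4\kk^2}{4\kk^2-\dd^2} = I + \tfrac{\dd^2}{4\kk^2-\dd^2}$, the leading-order contributions from $\g^{[2]}$ and $p^{[1]}$ cancel against the explicit $q^2$- and $q'$-terms in $I$ and $II$, respectively. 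The resulting differences are products in which one factor carries the Fourier multiplier $\tfrac{\xi^2}{4\kk^2+\xi^2}$ acting on $q$; this multiplier vanishes in $L^2$ as $\kk \to \infty$ uniformly on $L^2$-equicontinuous families by \eqref{tightness2}. The higher-order remainders $\g^{[\geq 4]}(\kk), p^{[3]}(\kk), p^{[\geq 5]}(\kk)$ come with extra powers of $\kk^{-1}\|q\|_{L^2}^2$ from \eqref{gpbig} and \eqref{pterms}, absorbing the $\kk^2$ growth of $\tfrac{\kk^4}{\kk^2-\vk^2}$.

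The main obstacle is upgrading these $L^2$-level cancellations to the $H^2$ norm required by \eqref{H2limit}. The strategy is to use Proposition~\ref{PROP:gpr} to bound the multiplicative factors $p(\vk), \g(\vk)$ in $H^1_\vk$ and $r(\vk)$ in $H^2_\vk$, and to trade any higher derivatives appearing after the Leibniz rule for products of $q$ with lower-order quantities using \eqref{Gderiv}--\eqref{rderiv}. Pointwise products in 1D are then controlled via the Sobolev embedding $H^1 \hookrightarrow L^\infty$. Integrating $\tfrac{d}{ds} r(\vk; q(s))$ over $s \in [0,t]$ and taking suprema produces \eqref{H2limit}.
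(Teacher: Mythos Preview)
Your computation of $\tfrac{d}{ds}r(\vk)$ under the difference flow and the cancellation structure you describe are correct and match the paper. The gap is in the final step: you cannot estimate $\tfrac{d}{ds}r(\vk)$ directly in $H^2$ when $q$ is merely in $L^2$. After your cancellations, the residual from $II(\kk)$ still contains the term $-\tfrac{4\mu\vk^3}{\kk^2-\vk^2}q'(\g(\vk)+1)$ (this is $\mathsf{err}_4$ in the paper). The prefactor is $O(\kk^{-2})$, but $\|q'\|_{H^2}=\|q\|_{H^3}$ is simply not finite for generic $q\in Q_\ast\subset L^2$, and the identities \eqref{Gderiv}--\eqref{rderiv} only let you rewrite derivatives of $\g,p,r$, not derivatives of $q$. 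The same obstruction hits the ``equicontinuity remainder'' in $II(\kk)$, namely $(\g(\vk)+1)\cdot\dd^3(4\kk^2-\dd^2)^{-1}q$: this lies in $H^{-1}$ at best, not $H^2$. So the plan to integrate an $H^2$ bound on $\tfrac{d}{ds}r(\vk)$ cannot close.

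The paper's remedy is to prove the \emph{weaker} statement with $H^{-2}$ in place of $H^2$ (where all the error terms above are harmless, via $L^1\hookrightarrow H^{-1}$ and duality), and then upgrade to $H^2$ a posteriori: since $Q_\ast$ is equicontinuous in $L^2$ and $q\mapsto\tfrac{\mu}{4\vk}r(\vk)$ is a diffeomorphism $L^2\to H^2_\vk$ commuting with translations, the set $\{r(\vk;q(t)):q\in Q,\ t\in\R\}$ is equicontinuous in $H^2$, and Lemma~\ref{LEM:equiequiv}(ii) converts $H^{-2}$ convergence plus $H^2$ equicontinuity into $H^2$ convergence. You should insert this reduction at the start and then carry out your error analysis in $H^{-2}$.
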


\begin{proof} 
Fix $\vk\geq 4$ and let $\kk\geq 2\vk$.
We verify the weaker statement
\begin{align}
\lim_{\kk\to \infty}\sup_{q\in Q}\sup_{|t|\leq T}   \| g_{-}( \vk ; e^{tJ\nabla(H_{\textup{mKdV}}-H_{\kk})}q)-g_{-}(\vk; q)\|_{H^{-2}}=0. \label{H-2limit}
\end{align}
To see that this suffices to prove \eqref{H2limit}, we note that from Proposition~\ref{PROP:equicty}, the set $Q_{\ast}$ defined in \eqref{Qast}, with $H_1=H_{\text{mKdV}}$ and $H_2=H_{\kk}$, is equicontinuous in $L^2$. Hence, the diffeomorphism property of $\frac{\mu}{4\vk}g_{-}(\vk)$ from Proposition~\ref{PROP:gpr} and the fact that for any $h\in \R$, $g_{-}(x+h;\vk, q(x))=g_{-}(x;\vk, q(x+h)),$
imply that the set 
\begin{align*}
E:= \big\{   \tfrac{\mu}{4\vk}g_{-}(\vk, e^{tJ\nabla(H_{\textup{mKdV}}-H_{\kk})}q) \, :  q\in Q, \,\,\, t\in\R\big\}
\end{align*}
is equicontinuous in $H^{2}$. Combining this fact with \eqref{H-2limit}, our desired \eqref{H2limit} follows from Lemma~\ref{LEM:equiequiv}. 

By the fundamental theorem of calculus, \eqref{H-2limit} follows from 
\begin{align}
\lim_{\kk\to \infty}\sup_{q\in Q_{\ast}}\sup_{|t|\leq T}   \big\|  \tfrac{d}{dt}g_{-}(\vk, q)  \big\|_{H^{-2}}=0. \label{H-2limit2}
\end{align}
Under the difference flow, \eqref{rHmkdvflow} and \eqref{rHk} imply
\begin{align}
\begin{split}
\tfrac{d}{dt}g_{-}(\vk)& =8\vk^3 g_{+}(\vk)-4\mu \vk q^{2}g_{+}(\vk)+4\mu \vk q'(\g(\vk)+1) \\
& \hphantom{XX}+8\kk^2 \vk g_{+}(\vk)-\tfrac{8\vk \kk^4}{\kk^2-\vk^2}\big[ g_{+}(\vk)(\g(\kk)+1)-g_{+}(\kk)(\g(\vk)+1)\big].
\end{split} \label{diffflowr}
\end{align}
Carefully rewriting the right hand side of \eqref{diffflowr}, we have
\begin{align}
\tfrac{d}{dt}g_{-}(\vk)= \sum_{j=1}^{8} \mathsf{err}_{j}, \label{rexpansion}
\end{align}
where 
\begin{align*}
\mathsf{err}_{1}& =-\tfrac{8\vk^5}{\kk^2- \vk^2} g_{+}(\vk), \qquad \qquad \mathsf{err}_{2} = -\tfrac{8\vk \kk^4}{\kk^2-\vk^2}g_{+}(\vk)\g^{[\geq 4]}(\kk),\qquad \mathsf{err}_{3} = \tfrac{4\mu \vk^3}{\kk^2-\vk^2}q^2 g_{+}(\vk)\\
&\hphantom{XXXXXX}\mathsf{err}_{4}=-\tfrac{4\mu \vk^3}{\kk^2-\vk^2}q' (\g(\vk)+1)\qquad
\mathsf{err}_{5} = \tfrac{8\mu \vk \kk^2}{\kk^2-\vk^2}q g_{+}(\vk)\cdot \dd^{2} (4\kk^2-\dd)^{-1}q  \\
   \mathsf{err}_{6}&= \tfrac{8\vk \kk^4}{\kk^2-\vk^2}(\g(\vk)+1)g_{+}^{[\geq 3]}(\kk) \qquad \qquad \mathsf{err}_{7} =-\tfrac{4\mu \vk \kk^2}{\kk^2-\vk^2}(\g(\vk)+1)\cdot  \dd^3 (4\kk^2-\dd^2)^{-1}q\\
&\hphantom{XXXXXX} \mathsf{err}_{8} = -\tfrac{4\mu \vk \kk^2}{\kk^2-\vk^2}g_{+}(\vk)\cdot \dd (2\kk+\dd)^{-1}q\cdot  \dd(2\kk+\dd)^{-1}q,
\end{align*}
and we used \eqref{g2term} to express $\mathsf{err}_{8}$.

We need to estimate each of these in $H^{-2}$. By Proposition~\ref{PROP:gpr}, the embedding $L^1\subset H^{-1}$, \eqref{pbd} and \eqref{gpbig}, we have
\begin{align*}
\| \mathsf{err}_{1}\|_{H^{-2}}&\les \kk^{-2} \|g_{+}(\vk)\|_{H^{-2}}\les \kk^{-2}\|q\|_{L^2}, \\
\| \mathsf{err}_{2}\|_{H^{-2}} &\les \kk^{2}\|g_{+}(\vk)\|_{H^{1}}\| \g^{[\geq 4]}(\kk)\|_{L^1} \les \kk^{-1}\|q\|_{L^2}^{5}, \\
\|\mathsf{err}_{3}\|_{H^{-2}}&\les \kk^{-2}\|q^2\|_{L^1}\|g_{+}(\vk)\|_{L^{\infty}} \les \kk^{-2}\|q\|_{L^{2}}^{2},
\end{align*}
which are all acceptable bounds. For $\mathsf{err}_{4}$, we \eqref{g1} and duality to estimate the product: 
\begin{align*}
\| q' \g(\vk)\|_{H^{-1}} & =\sup_{\|h\|_{H^1}=1} \bigg\vert \int q'\g(\vk) h dx \bigg\vert  = \sup_{\|h\|_{H^1}=1}\| q\|_{L^2}\| h \g(\vk)\|_{H^{1}}  \les \|q\|_{L^{2}}^{3},
\end{align*}
which yields the good bound
\begin{align*}
\| \mathsf{err}_{4}\|_{H^{-2}} & \les \kk^{-2}( \|q\|_{L^2}+\|q\|_{L^2}^{3}).
\end{align*}
For $\mathsf{err}_{5}$, $\mathsf{err}_{7}$ and $\mathsf{err}_{8}$, we rely on equicontinuity to obtain acceptable bounds. Indeed, using $L^1 \subset H^{-1}$ and Proposition~\ref{PROP:gpr}, H\"{o}lder's inequality and Sobolev embedding, we have
\begin{align*}
\| \mathsf{err}_{5}\|_{H^{-2}}&\les \big\| g_{+}(\vk) q\big(\tfrac{(-\dd^2)}{4\kk^2-\dd^2}q \big) \big\|_{H^{-1}} \les \|g_{+}(\vk)\|_{L^{\infty}}\|q\|_{L^2} \big\| \tfrac{(-\dd^2)}{4\kk^2-\dd^2}q \big\|_{L^2}  \les \|q\|_{L^2}^{2}\big\| \tfrac{(-\dd^2)}{4\kk^2-\dd^2}q \big\|_{L^2}, \\
\| \mathsf{err}_{8}\|_{H^{-2}}&\les \|g_{+}(\vk)\|_{L^{\infty}}\big\| \dd (2\kk-\dd)^{-1}q \|_{L^2}  \| \dd (2\kk+\dd)^{-1}q\|_{L^2},
 \end{align*}
where the right hand side of these estimates tend to zero as $\kk \to \infty$, uniformly in $q\in Q_{\ast}$ and $|t|\leq T$, by Lemma~\ref{LEM:equ2} and \eqref{tightness2}. Again, by duality, we obtain 
\begin{align*}
\big\| \g(\vk)  \dd^3 (4\kk^2-\dd^2)^{-1}q \big\|_{H^{-1}} & \les \|\g(\vk)\|_{H^{1}} \big\|  \dd^2 (4\kk^2-\dd^2)^{-1}q\big\|_{L^2},
\end{align*}
and thus
\begin{align*}
\| \mathsf{err}_{7}\|_{H^{-2}}  & \les \|q\|_{L^{2}}^{2}  \big\| \dd^{2} (4\kk^2 -\dd^2)^{-1}q\big\|_{L^2},
\end{align*}
which tends to zero uniformly in $q\in Q_{\ast}$ as $\kk \to \infty$ and $|t|\leq T$ by Lemma~\ref{LEM:equ2}.

It remains to estimate $\mathsf{err}_{6}$.
We write 
\begin{align*}
\mathsf{err}_{6}&= \tfrac{8\vk \kk^4}{\kk^2-\vk^2}(\g(\vk)+1)g_{+}^{[ 3]}(\kk)+\tfrac{8\vk \kk^4}{\kk^2-\vk^2}(\g(\vk)+1)g_{+}^{[\geq 5]}(\kk)  =: \mathsf{err}_{6,1}+\mathsf{err}_{6,2}.
\end{align*}
From \eqref{gpbig} and Proposition~\ref{PROP:gpr},
\begin{align*}
\|\mathsf{err}_{6,2}\|_{H^{-2}} \les \kk^{2} \| \g(\vk)g_{+}^{[\geq 5]}(\kk)\|_{L^2}\les \kk^{-1} \|q\|_{L^2}^{7}.
\end{align*}
For $\mathsf{err}_{6,1}$, we start with some preliminary estimates. From \eqref{g2term} and \eqref{pterms}, we have
\begin{align*}
\|g_{+}^{[3]}(\kk) \|_{H^{-2}} &\les  \big\|(4\kk^2-\dd^2)^{-1}\big(  q \g^{[2]}(\kk)\big) \bigg\|_{H^{-1}}  \les \kk^{-2}\|q\|_{L^2} \| \g^{[2]}(\kk)\|_{L^{\infty}} \les \kk^{-3}\|q\|_{L^2}^{3}.
\end{align*}
Similarly, using \eqref{pterms} as well as arguing by duality, yields
\begin{align*}
\|\g(\vk)g_{+}^{[3]}(\kk) \|_{H^{-1}}  & \les \kk^{-3} \|q\|_{L^{2}}^{5}
\end{align*}
and thus
\begin{align*}
\|\mathsf{err}_{6,1}\|_{H^{-2}} \les \kk^{-1} (\|q\|_{L^2}^{3}+\|q\|_{L^{2}}^{5}).
\end{align*}
Thus, each individual term of \eqref{rexpansion} tends to zero in $H^{-2}$, uniformly in $q\in Q_{\ast}$ and $|t|\leq T$ as $\kk\to \infty$. This completes the proof of \eqref{H-2limit2} and hence also the proof of \eqref{H-2limit}.
\end{proof}

\begin{proof}[Proof of Theorem~\ref{THM:GWP}]
We prove the following statement: given $T>0$ and $q_{n}(0) \in \mathcal{S}$ converging in $L^{2}$, then the  corresponding sequence of solutions $q_{n}(t)$ in $\mathcal{S}$ to \eqref{mkdv} converge in $L^2$, uniformly in $|t|\leq T$. 

To this end, let $\dl_0>0$ denote the smallest $\dl>0$ amongst all the claims in this paper thus far. Then, we choose $\kk \geq 1$ such that $$\kk^{-\frac{1}{2}} \sup_{n\in \NB}\|q_{n}(0)\|_{L^2} \leq \dl_0.$$
Namely, we know have that $q_{n}(0)\in B_{\dl_0, \kk}$, with $\kk$ independent of $n\in \mathbb{N}$.

Let $Q=\{q_{n}(0)\}$ and define $Q_{\ast}$ as in \eqref{Qast}, with $H_1=H_{\text{mKdV}}$ and $H_2=H_{\kk}$. Note that by Proposition~\ref{PROP:equicty}, $Q^{\ast}$ is equicontinuous in $L^2$.
By the commutativity of the flows, we have 
\begin{align*}
q_{n}(t)=e^{tJ\nabla( H_{\text{mKdV}}-H_{\kk})}\circ e^{tJ\nabla H_{\kk}}q_{n}(0).
\end{align*}
Hence,
\begin{align}
\sup_{|t|\leq T} \|q_{n}(t)-q_{m}(t)\|_{L^2}& \leq \sup_{|t|\leq T} \|e^{tJ\nabla H_{\kk}}q_{n}(0) -e^{tJ\nabla H_{\kk}}q_{m}(0) \|_{L^2} \notag \\
& \hphantom{XX}+ 2\sup_{q\in Q_{\ast}}\sup_{|t|\leq T} \|e^{tJ\nabla( H_{\text{mKdV}}-H_{\kk})}q-q\|_{L^2}. \label{L2diff1}
\end{align}
By Proposition~\ref{PROP:GWPHk}, the well-posedness of the $H_{\kk}$-flow in $L^2$ implies that the first term in \eqref{L2diff1} tends to zero as $n,m\to \infty$. To deal with the second term, we perform a change of variables: fix $\vk\geq 4$ and let 
$q(t):= e^{tJ\nabla (H_{\text{mKdV}}-H_{\kk})}q$
for $q\in Q^{\ast}$ and we increase $\kk$ if necessary so that $\kk\geq 2\vk$. Note that $q\in (Q_{\ast})_{\ast}$ for any $t\in \R$. 
We use the change of variables 
\begin{align}
q(t) \mapsto \tfrac{\mu}{4\vk} g_{-}(\vk; q(t)). \label{cov}
\end{align}
By Proposition~\ref{PROP:diffflow}, we have 
\begin{align*}
\lim_{\kk\to \infty} \sup_{q\in Q_{\ast}} \sup_{|t|\leq T} \| g_{-}(\vk; q(t))-g_{-}(\vk;q)\|_{H^{2}}=0.
\end{align*}
The diffeomorphism property of the change of variables \eqref{cov} then implies that \eqref{L2diff1} vanishes as $\kk \to \infty$. The remaining claims in Theorem~\ref{THM:GWP} follow exactly as in \cite[Corollary 5.2 and 5.3] {KV}, with well-posedness in $H^{s}$, for $0<s<\tfrac{1}{2}$ employing the bounds and equicontinuity proved in Proposition~\ref{PROP:equicty}.
\end{proof}

\begin{remark}\rm  \label{RMK:H16}
In the higher regularity setting of $H^{s}$, for $s\geq \tfrac{1}{6}$, we do not need to apply the change of variables \eqref{cov} and we can replace the approximation property in \eqref{H2limit} with 
\begin{align}
\lim_{\kk\to \infty}\sup_{q\in Q}\sup_{|t|\leq T}   \| e^{tJ\nabla(H_{\textup{mKdV}}-H_{\kk})}q-q\|_{H^{\frac{1}{6}}}=0. \label{H2limit2}
\end{align}
Namely, we establish \textit{directly} that the $H_{\kk}$ flow approximates the $H_{\text{mKdV}}$ flow. This is possible as the additional spatial regularity in $H^{\frac{1}{6}}$ implies that $q^3$ is a well-defined distribution. The change of variables \eqref{cov} is chosen because, apart from its diffeomorphism property, it evolves simply under the linearised $H_{\text{mKdV}}$-flow, and the nonlinear terms in \eqref{rHmkdvflow} are well-defined in the sense of distributions under the weaker assumption of merely $q\in L^2$.
\end{remark}

\begin{ackno}\rm The author would like to thank Rowan Killip and Monica Vi\c{s}an for their support and for helpful discussions.
\end{ackno}

\end{document}